\newcommand{\field}[1]{\mathbb{#1}}
\newcommand{\Z}{\field{Z}}
\newcommand{\Q}{\field{Q}}
\newcommand{\sgn}{\operatorname{sgn}}
\newcommand{\SL}{\operatorname{SL}}
\DeclareMathOperator*{\Res}{Res}
\newcommand{\im}{\text{Im}}
\newcommand{\e}{e}
\renewcommand{\epsilon}{\varepsilon}
\renewcommand{\theta}{\vartheta}
\renewcommand{\rho}{\varrho}
\renewcommand{\phi}{\varphi}
\numberwithin{equation}{section}
\newtheorem{theorem}{\textbf{Theorem}}
\numberwithin{theorem}{section}
\newtheorem{lemma}[theorem]{\textbf{Lemma}}
\newtheorem{proposition}[theorem]{\textbf{Proposition}}
\newtheorem{definition}[theorem]{\textbf{Definition}}
\theoremstyle{remark}
\newtheorem{remark}{Remark}
\newcommand{\rl}[1]{\left( #1 \right)}
\newcommand{\rlf}[2]{\left(\frac{#1}{#2}\right)}
\newcommand{\ql}[1]{\left[ #1 \right]}
\renewenvironment{proof}[1][Proof]{\begin{trivlist}
\item[\hskip \labelsep {\bfseries #1:}]}{\qed\end{trivlist}}
\begin{document}
\title{On the Fourier coefficients of meromorphic Jacobi forms}
\author{Ren\'e Olivetto}
\address{Mathematical Institute\\University of Cologne\\ Weyertal 86-90 \\ 50931 Cologne \\Germany}
\email{rolivett@math.uni-koeln.de}
\thanks{The research of the author is supported by Graduiertenkolleg ``Global Structures in Geometry and Analysis". This paper is part of the author's Ph.D. thesis, written under the supervision of K. Bringmann.}
\date{\today}
\keywords{meromorphic Jacobi forms, almost harmonic Maass forms, canonical Fourier coefficients}
\subjclass[2010] {}
\let\thefootnote\relax\footnote{}
\begin{abstract}
In this paper, we describe the automorphic properties of the Fourier coefficients of meromorphic Jacobi forms. Extending results of Dabholkar, Murthy, and Zagier \cite{DMZ}, and Bringmann and Folsom \cite{BF2}, we prove that the canonical Fourier coefficients of a meromorphic Jacobi form $\phi(z;\tau)$ are the holomorphic parts of some (vector-valued) almost harmonic Maass forms. We also give a precise description of their completions, which turn out to be uniquely determined by the Laurent coefficients of $\phi$ at each pole, as well as some well known real analytic functions, that appear for instance in the completion of Appell-Lerch sums \cite{Zwegers:PhD}.
\end{abstract}

\maketitle
\section{Introduction and statement of results}
The theory of Jacobi forms was first extensively studied by Eichler and Zagier \cite{MR781735}. A Jacobi form is an automorphic form in two or more variables, one of which is defined on $\mathbb{H}$, usually denoted by $\tau$, and is called the modular variable. The other variables are called elliptic, which are defined in $\mathbb{C}$ and in the case that there is precisely one, we denote it by $z$. As well as automorphic and elliptic  properties (for details see Definition \ref{defi:5}), Jacobi forms must satisfy a growth condition and they are required to be holomorphic in both of the variables.

As described in detail in \cite{MR781735} the space of Jacobi forms of given index and weight is isomorphic to a certain space of vector-valued modular forms of half-integral weight. More precisely, given a holomorphic Jacobi form of weight $k\in \Z$ and index $m \in \mathbb{N}$, it is possible to write it in the form
\begin{equation}\label{eq:1}
\varphi\left(z;\tau \right)=\sum_{\ell \pmod{2m}}h_{\ell}(\tau)\vartheta_{m,\ell}\left(z;\tau \right),
\end{equation} 
where the functions $\vartheta_{m,\ell}$, independent of $\phi$, are the usual level $m$ Jacobi theta functions \eqref{eq:8}, while the functions $h_{\ell}$ are essentially the Fourier coefficients of $\phi$ as a function of $z$ \eqref{eq:9}. Because of the modular properties of $\phi$ and $\theta_{m,\ell}$, it follows that $\left(h_{\ell}\right)_{\ell \pmod{2m}}$ is  a vector-valued modular form of weight $k-\frac{1}{2}$ for $\SL_2(\Z)$. Decomposition \eqref{eq:1} is called the \textit{theta decomposition} of $\phi$.

During the last decade the study of other fascinating mathematical objects, including mock theta functions \cite{Zwegers:PhD} and Kac-Wakimoto characters \cite{BF2}, put the attention on meromorphic Jacobi forms. Such a function satisfies all the transformation laws of a classical Jacobi form, but it is allowed to be meromorphic as a function of $z$, and weakly holomorphic in $\tau$, i.e., with no poles in the upper half plane.
A theta decomposition as in \eqref{eq:1} clearly does not make sense in the meromorphic case, since the right hand side is holomorphic in $z$. Since $h_{\ell}$ should be morally thought of as the Fourier coefficients of $\phi$, this does not make sense in general if a function is meromorphic. On the other hand, it is possible to consider a local Fourier expansion of $\phi$ by defining the so-called ``canonical Fourier coefficients", and to define the ``finite part" $\phi^F$ of $\phi$ in terms them. The difference between $\phi$ and $\phi^F$ essentially consists of the contribution of the residues of the poles of $\phi$, since it represents the difference between the Fourier expansions defined in two different regions. For this reason it is called the ``polar part".

The idea of splitting a meromorphic Jacobi form into two parts was originally due to Zwegers \cite{Zwegers:PhD}. Motivated by the fact that most of the fifth-order mock theta functions can be given as Fourier coefficients of certain quotients of ordinary Jacobi theta series, Zwegers investigated  the modularity of the Fourier coefficients of meromorphic Jacobi forms. In order to do so, he gave a decomposition of a meromorphic Jacobi form into two non-holomorphic pieces, one of which was given as a theta decomposition as in \eqref{eq:1}, whose Fourier coefficients turned out to be non-holomorphic modular forms.

Another important field where meromorphic Jacobi forms play a fundamental role is the quantum theory of black holes. In their recent paper, Dabholkar, Murthy, and Zagier \cite{DMZ} explained the relation between the wall-crossing phenomenon and the mock modularity of the generating functions of certain quantum black holes. Of particular interest to the purposes of this paper, they described the structure of meromorphic Jacobi forms with poles of order at most two. Their work improved upon Zwegers's work by separating $\phi$ into two natural parts; one holomorphic and well understood and the other constructed directly from the poles of $\phi$. To be more precise, they constructed a canonical decomposition
\begin{equation}\label{eq:2}
\phi=\phi^F+\phi^P
\end{equation}
where $\phi^F$ turned out to be a mock Jacobi form (see Section 3 for the definition), and $\phi^P$ was a finite linear combination of Appell-Lerch sums with modular forms as coefficients. Decomposition \eqref{eq:2} was defined as allude to before. They then completed explicitly both of the pieces obtaining a new decomposition
\begin{equation} \label{eq:3}
\phi=\widehat{\phi}^F+\widehat{\phi}^P,
\end{equation}
where both of the summands satisfied the same modular property of the original Jacobi form $\phi$. Equivalently, they proved that the canonical Fourier coefficients $h_{\ell}$ (defined in \eqref{eq:23}) were mock modular forms.

%Of particular interest to the purposes of this paper, they described the structure of meromorphic Jacobi forms, with poles of order at most $2$, constructing two different types of decompositions, say
%\begin{equation} 
%\phi=\phi^F + \phi^P
%\end{equation}
%and
%\begin{equation} 
%\phi=\widehat{\phi}^F+\widehat{\phi}^P .
%\end{equation}
%The approach they use is different from the one used by Zwegers, but the second decomposition \eqref{eq:3} turns out to be equivalent. Decomposition \eqref{eq:2} is defined as alluded to before (we describe how to obtain it in Section 5). Decomposition \eqref{eq:3} was only carried out when the poles have order at most two, and they proved that in that case $\phi^F$ turns out to be a mock Jacobi form (see Section 3 for the definition) whose completion is exactly $\widehat{\phi}^F$. Equivalently, they proved that the canonical Fourier coefficients $h_{\ell}$ \eqref{eq:23} are mock modular forms.

A more general construction of a completion for $\phi^F$ led to a new type of modular object, as Bringmann and Folsom discovered in \cite{BF2}. Considering the irreducible $s\ell(m|n)^\wedge$ modules of highest weight $\Lambda(\ell)$ for the affine Lie superalgebra $s\ell(m|n)^{\wedge}$, Bringmann and Folsom investigated the modularity of its specialized characters $tr_{L_{m,n}(\Lambda(\ell))}$ (given by Kac and Wakimoto in \cite{MR1810948}), extending a previous result of Bringmann and Ono \cite{MR2534107} for the special case $n=1$. This led them to study the structure of the generating function
\[
tr_{L_{m,n}}:=\sum_{\ell \in \Z} tr_{L_{m,n}(\Lambda(\ell))}e^{2\pi i\ell z},
\]
for $-\im(\tau)<\im(z)<0$, which turned out to be essentially the meromorphic Jacobi form  
\[
\phi\left(z;\tau\right):=\frac{\theta\left( z+\frac{1}{2};\tau\right)^m }{\theta\left( z;\tau\right)^n} ,
\]
where $m,n \in \mathbb{Z}$ and $\theta\left(z;\tau\right):=\sum_{\nu \in \frac{1}{2}+\mathbb{Z}}e^{\pi i \nu ^2 \tau+2\pi i \nu\left( z+\frac{1}{2} \right) }$ is the classical Jacobi theta function. 
Notice that in this particular case the function $\phi$ has the advantage to have poles in $\Z \tau +\Z$. On the other hand, the poles have arbitrary order $n$, which makes the situation more complicated respect to the case $n\leq 2$. Bringmann and Folsom's investigation of the functions $h_{\ell}$ led to the definition of a new type of modular object. In essence, they showed that the canonical Fourier coefficients can be completed to obtain ``almost harmonic Maass forms" (see Definition \ref{defi:6}). 

Using the same approach as in \cite{BF2} we prove that any meromorphic Jacobi form with poles in $\Q\tau+\Q$ allows a decomposition as in \eqref{eq:3} and that the canonical Fourier coefficients $h_{\ell}$ can be canonically completed to almost harmonic Maass forms. This naturally generalizes the results in \cite{BF2} and \cite{DMZ}.

\begin{theorem} \label{thm:1}
The canonical Fourier coefficients of a meromorphic Jacobi form with poles in $\Q \tau +\Q$ are the holomorphic parts of vector--valued almost harmonic Maass forms.
\end{theorem}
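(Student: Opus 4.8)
The plan is to follow the strategy of Bringmann and Folsom \cite{BF2}, replacing the special theta quotients treated there by an arbitrary meromorphic Jacobi form and allowing poles at arbitrary points of $\Q\tau+\Q$. First I would use the elliptic transformation law to reduce the analysis to finitely many poles: since the poles of $\phi$ lie in $\Q\tau+\Q$ and are permuted, up to the automorphy factor, by translations by the lattice $\Z\tau+\Z$, only finitely many pole classes $z_s=\alpha_s\tau+\beta_s$ with $\alpha_s,\beta_s\in\Q$ occur modulo the lattice. Around each such pole I would write the Laurent expansion of $\phi$ in the elliptic variable and record its principal part; the coefficients of this expansion are, up to elementary factors, holomorphic modular forms in $\tau$, and they constitute precisely the data that will enter the completion.

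Next I would set up the canonical decomposition \eqref{eq:2}, $\phi=\phi^F+\phi^P$, by defining the finite part $\phi^F$ through the canonical Fourier coefficients \eqref{eq:23}, obtained from a suitably averaged local Fourier expansion of $\phi$ about each pole, and setting $\phi^P:=\phi-\phi^F$. The key structural step is to show that $\phi^P$ is a finite linear combination, with the Laurent coefficients found above as (modular) coefficients, of Appell--Lerch sums and their derivatives in the elliptic variable: a pole of order $n$ at $z_s$ contributes the Appell--Lerch sum attached to $z_s$ together with its first $n-1$ elliptic derivatives. This is the point at which the hypothesis $z_s\in\Q\tau+\Q$ is used, since it guarantees that the relevant Appell--Lerch sums carry rational characteristics and hence transform under a congruence subgroup, which is what forces the vector-valued structure.

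I would then complete the polar part. Each Appell--Lerch sum has a Zwegers-type non-holomorphic completion \cite{Zwegers:PhD}, obtained by adding the real-analytic function built from the incomplete Gamma (error) integral; differentiating these completions in the elliptic variable produces the completions of the higher-order contributions, and it is exactly these derivatives that force the appearance of products of theta functions with non-holomorphic Eichler-type integrals. Carrying out the theta decomposition \eqref{eq:1} of the completed polar part $\widehat{\phi}^P$ then lets me read off, coefficient by coefficient, the non-holomorphic correction that must be added to each $h_\ell$. Writing $\widehat{h}_\ell:=h_\ell+(\text{this correction})$, I obtain \eqref{eq:3}, and since $\phi$ and the explicit completion both transform like Jacobi forms, the vector $(\widehat{h}_\ell)_\ell$ inherits a modular transformation of weight $k-\tfrac12$. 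Finally I would verify the remaining analytic conditions of Definition \ref{defi:6}: the correction terms are built from products of modular forms with the non-holomorphic integrals, so $(\widehat{h}_\ell)_\ell$ has the structure of an almost harmonic Maass form whose depth is governed by the maximal order of the poles of $\phi$ and whose holomorphic part is $(h_\ell)_\ell$, as claimed.

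The main obstacle is the treatment of poles of order $\geq 3$ at arbitrary rational points. For such poles the completion is no longer harmonic: the iterated elliptic derivatives of the Zwegers completion yield genuine products of non-holomorphic functions with modular forms, so the naive Laplacian does not vanish and one is forced into the almost harmonic category, with the depth controlled by the pole order. Keeping track of the combinatorics of these iterated derivatives, verifying that each resulting term lands in the correct weight and carries the prescribed depth, and checking that the characteristics produced by the general rational poles assemble into a consistent vector-valued multiplier system, is the technical heart of the argument; the cases $n\le 2$ of \cite{DMZ} and the lattice-point poles of \cite{BF2} are recovered as the special cases in which these complications are either absent or already resolved.
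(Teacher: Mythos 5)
Your overall route is the same as the paper's: both follow the Bringmann--Folsom strategy of forming the canonical splitting $\phi=\phi^F+\phi^P$, expressing $\phi^P$ through Appell-type sums attached to the (finitely many) pole classes, completing the polar part \`a la Zwegers, and reading off the completed coefficients $\widehat{h}_\ell$ from the theta decomposition of $\widehat{\phi}^P$, which yields the weight $k-\frac{1}{2}$ vector-valued modular transformation. However, your sketch has a genuine gap exactly at the step that makes the theorem true as stated, namely verifying that $\widehat{h}_\ell$ has the structure required by Definition \ref{defi:6}: a finite sum of \emph{almost holomorphic modular forms} times \emph{iterated Maass raising operators} applied to \emph{harmonic weak Maass forms}, as in \eqref{eq:4}. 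Saying that the corrections are ``products of modular forms with non-holomorphic integrals'' does not establish this; products of non-holomorphic functions are not, in general, of the form \eqref{eq:4}, and nothing in your argument converts derivatives in the elliptic variable into raising operators in $\tau$.

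Two specific ideas from the paper are missing. First, your claim that the Laurent coefficients of $\phi$ at a pole are ``up to elementary factors, holomorphic modular forms in $\tau$'' is false in general for poles $z_s=\alpha\tau+\beta$ with $\alpha\neq 0$ and for poles of higher order: individually they are only quasimodular. The paper introduces the auxiliary function $F^{(s)}(\epsilon;\tau)=e^{M\pi\epsilon^2/v}e(M\alpha\beta+2M\alpha\epsilon)q^{M\alpha^2}$ and proves (Proposition \ref{pro:3}) that the Laurent coefficients $D_j^{(s)}$ of $F^{(s)}\phi$ are \emph{almost holomorphic} modular forms of weight $k-j$; these non-holomorphic combinations, not the raw coefficients $\widetilde{D}_j^{(s)}$, are the coefficients $g_{j,i}$ in \eqref{eq:4}. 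Second, and decisively, the paper uses the heat-operator identity (Proposition \ref{pro:7}): because $R_{M,\ell}$ is annihilated by $H_M$, dividing the completion terms by $F^{(s)}$ and differentiating $2j$ (resp. $2j+1$) times in $\epsilon$ at $\epsilon=0$ produces $(M/\pi)^j R_{\frac{1}{2}}^j$ (resp. $R_{\frac{3}{2}}^j$) applied to the value (resp. first $\epsilon$-derivative) at $\epsilon=0$ (Lemma \ref{lem:2}); those values are then identified as non-holomorphic parts of genuine vector-valued harmonic weak Maass forms $\widehat{\rho}^{(s)}_{M,\ell}$ built from Zwegers's \emph{multivariable} Appell functions $\mu_{2M,\ell-M}$ of \cite{Zwegers1} (Proposition \ref{pro:1}), not merely from the one-variable completions of \cite{Zwegers:PhD} that you invoke. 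A concrete symptom of the omission: the two-derivatives-to-one-raising-operator conversion gives depth $\left[\frac{N-1}{2}\right]$ (Theorem \ref{thm:2}), whereas your ``depth governed by the maximal pole order'' implicitly predicts about $N-1$; without $F^{(s)}$ and the heat-operator mechanism, the expression you end up with cannot be matched against Definition \ref{defi:6} at all.
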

\begin{remark}
The proof of Theorem \ref{thm:1} will be done assuming integral weight and positive integral index. Furthermore, it is assumed that the modularity is satisfied for the full modular group $\SL_2(\Z)$. We point out that the result extends naturally to half--integral weight, half--integral index, and for congruence subgroups. An example of the more general case in described in the author's Ph.D. thesis \cite{Oliv:PhD}. 
\end{remark}

The remainder of the paper is organized as follows. In Section 2, we describe the main automorphic objects involved in the proof of Theorem \ref{thm:1}, in particular giving the definition of \textit{almost harmonic Maass forms}. In Section 3, we describe and recall some results on Jacobi forms. In Section 4, we investigate the properties of a particular function that plays a key role in this work. In Section 5, we recall the decomposition \eqref{eq:2} of a meromorphic Jacobi form in the sense of Dabholkar, Murthy, and Zagier. Furthermore, we describe how to obtain a decomposition as in \eqref{eq:3} for any meromorphic Jacobi form, i.e., how to complete the two functions that arise in the previous decomposition in order to obtain a new decomposition whose summands transform well. Finally, we prove Theorem \ref{thm:1} in Section 6.

\section{Almost harmonic Maass forms}

In this section we give the definition of new automorphic objects recently introduced by Bringmann--Folsom \cite{BF2}, called \textit{almost harmonic Maass forms}. In order to do that we also recall other automorphic objects including harmonic weak Maass forms and almost holomorphic modular forms.

For a fixed $k \in \frac{1}{2}\Z$, we define 
\begin{equation*}
\Gamma:= \begin{cases}
\SL_{2}(\Z) & \text{if $k\in \Z$,} \\
\Gamma_0 (4) & \text{if $k\notin \Z$.}
\end{cases}
\end{equation*}
Let $ f: \mathbb{H} \rightarrow \mathbb{C} $ be a smooth function. 
 For each $\gamma= 
\bigl(
\begin{smallmatrix}
    a & b  \\
    c & d
  \end{smallmatrix}
\bigr) \in \Gamma$  we define the weight $k\in \frac{1}{2}\Z$ \textit{slash--operator} $|_k$ by
\[
f|_k \gamma (\tau):=j(\gamma,\tau)^{-2k} f(\gamma \tau),
\]
where as usual 
\begin{equation*}
j(\gamma,\tau):= \begin{cases}
\sqrt{c\tau+d} & \text{if $k\in \Z$,} \\
\sqrt{c\tau+d} \epsilon_d ^{-1}\left( \frac{c}{d} \right)  & \text{if $k\notin \Z$,}
\end{cases}
\end{equation*}
with 
\begin{equation*}
\epsilon_d:= \begin{cases}
1 & \text{if $d \equiv 1 \pmod{4}$,} \\
i & \text{if $d \equiv 3 \pmod{4}$},
\end{cases}
\end{equation*}
and where $\left( \frac{c}{d} \right)$ denotes the Jacobi symbol.
Throughout for $\tau \in \mathbb{H}$, we let $\tau=u+iv$, where $u,v \in \mathbb{R}$, and set $q:=e^{2\pi i \tau}$.
The weight $k$ \textit{hyperbolic Laplacian} is defined by 
\[
\Delta _k:=-v^2\left( \frac{\partial ^2}{\partial u^2}+\frac{\partial ^2}{\partial v^2} \right)+ikv \left( \frac{\partial }{\partial u}+i\frac{\partial }{\partial v} \right)  .
\]
\begin{definition}
A smooth function $f:\mathbb{H}\rightarrow \mathbb{C}$ is called a weak Maass form of weight $k\in \frac{1}{2}\Z$, Dirichlet character $\chi$ and Laplace eigenvalue $s \in \mathbb{C}$ on a congruence subgroup $\widetilde{\Gamma}$ of $\Gamma$ if the following conditions hold:
\begin{enumerate}
\item For all $\gamma=\bigl(
\begin{smallmatrix}
    a & b  \\
    c & d
  \end{smallmatrix}
\bigr) \in \widetilde{\Gamma}$
\[
f|_k \gamma ( \tau ) = \chi(d) f(\tau).
\]
\item The function $f$ is an eigenfunction for $\Delta_k$ with eigenvalue $s$, i.e., 
\[
\Delta_k(f)=sf.
\]
\item The function $f$ has at most linear exponential growth at the cusps.
\end{enumerate}
Furthermore, if $s=0$ we call $f$ a harmonic weak Maass form.
\end{definition}

It is well known (see for example Section 3 in \cite{MR2097357}) that a harmonic weak Maass form $f$ canonically splits into a holomorphic part, called a \textit{mock modular form}, and a non-holomorphic part, which can be written as a $q$-series in terms of the incomplete Gamma-function. More precisely let 
\[
\Gamma(\alpha;x):=\int\limits_{x}^{\infty} e^{-t}t^{\alpha-1}dt
\]
be the incomplete Gamma-function. The integral converges for $\alpha >0$, and it can be holomorphically continued in $\alpha$ (for $x\neq 0$). If we set
\[
H(t):=e^{-t}\Gamma(1-k,-2t),
\]
extended by continuity according to the incomplete Gamma-function, then for $k\neq 1$
\[
f(\tau)=c_f^{-}v^{1-k}+\sum_{n\gg -\infty} c_f^{+}(n)q^n +\sum_{n\ll 0} c_f^{-}(n)H(2\pi n v)e(nu).
\]
If $k=1$ then the first term must be replaced by $c_f^{-}\log\rl{v}$.

%in terms of the incomplete Gamma-function
%\[
%\Gamma(\alpha;x):=\int\limits_{x}^{\infty} e^{-t}t^{\alpha-1}dt,
%\]
%namely 
%\[
%f(\tau)=\sum_{n\gg -\infty}c_{f}^{+}(n) q^n + \sum_{n<0}c_{f}^{-}(n)\Gamma(k-1,4\pi|n|v) q^n.
%\]

The other ingredient needed to define almost harmonic Maass forms is an automorphic object first defined by Kaneko-Zagier \cite{MR1363056}, the so-called \textit{almost holomorphic modular form}. Such a function is a special kind of non-holomorphic modular form which can be written as a polynomial in $\frac{1}{v}$ with holomorphic coefficients. Here, accordingly as in \cite{BF2}, we consider a slight modification of this definition, allowing weakly holomorphic coefficients (i.e., it is allowed a principal part in the Fourier expansion at the cusps). Special examples, that emphasize the importance of this automorphic object, are the non-holomorphic Eisenstein series $E_2^{*}$, and the non-holomorphic derivative of holomorphic modular forms. Furthermore we call the holomorphic part of an almost holomorphic modular form, i.e., its constant term, a \textit{quasimodular form}.

Finally, for each $k\in \frac{1}{2}\Z$, we recall the \textit{Maass raising operator} of weight $k$ defined by
\[
R_k := 2i \frac{\partial}{\partial \tau}+\frac{k}{v}.
\]
This operator acts on the space of Maass forms, and in particular it sends a Maass form of weight $k$ and Laplace eigenvalue $s$ to a Maass form of weight $k+2$ and Laplace eigenvalue $s+k$. Therefore, it is natural to define the iterations of $R_k$ for any positive integer $n$ as
\[
R_k^n := R_{k+2(n-1)}\circ \cdots \circ R_{k+2} \circ R_k .
\]
We also denote with $R^0_k$ the identity operator.
\begin{definition} \label{defi:6}
A smooth function $f:\mathbb{H}\rightarrow\mathbb{C}$ is called an almost harmonic Maass form of weight $k\in \frac{1}{2}\Z$ and depth $r\in \mathbb{N}\cup \{0\}$ for a congruence subgroup $\widetilde{\Gamma}$ of $\Gamma$ and character $\chi$ if the following conditions hold:
\begin{enumerate}
\item For all $\gamma=\bigl(
\begin{smallmatrix}
    a & b  \\
    c & d
  \end{smallmatrix}
\bigr) \in \widetilde{\Gamma}$
\[
f|_k \gamma ( \tau ) = \chi(d) f(\tau).
\]
\item The function $f$ can be written as a finite linear combination of objects of the form
\begin{equation} \label{eq:4}
\sum\limits_{j=1}^{r_i} g_{j,i} R_{k+2-\nu_i}^{j-1}\left(h_i\right),
\end{equation}
where the $h_i$'s are harmonic weak Maass forms of weight $k+2-\nu_i$, $\nu_i \in \frac{1}{2}\Z$ are fixed, and $g_{j,i}$ are almost holomorphic modular forms of weight $\nu_i -2j$. 
The depth $r$ of  $f$ is defined as the maximum power of the raising operator, i.e, $r:=max_{i}\{ r_i \}-1$.
\end{enumerate}
\end{definition}
\begin{remark}
We modify the original definition in \cite{BF2} to also allow sums of objects of the form \eqref{eq:4}. 
\end{remark}
\begin{remark}
Each summand in \eqref{eq:4} is an automorphic object for some congruence subgroup of $\widetilde{\Gamma}$. 
\end{remark}
\begin{remark}
For each automorphic object it is possible to generalize the definition allowing it to be \textit{vector-valued}. With that we mean that, for a positive integer $N$, the vector of functions $\textbf{f}(\tau):=(f_j(\tau))_{j\pmod{N}}$ satisfies 
\[
\textbf{f}(\gamma\tau)=(c\tau+d)^k U(\gamma) \textbf{f}(\tau),
\]
for all $\gamma= 
\bigl(
\begin{smallmatrix}
    a & b  \\
    c & d
  \end{smallmatrix}
\bigr)\in \SL_2(\Z)$, where $U(\gamma)$ is a certain $N\times N$ matrix.
\end{remark}
\section{Jacobi forms}
\subsection{Holomorphic and mock Jacobi forms}
Here we recall the basic definitions and properties of holomorphic and mock Jacobi forms. For an extensive description of the holomorphic case we refer to \cite{MR781735}, while for a more general theory of mock Jacobi forms see for instance \cite{MR2680205}. Throughout, for $z\in \mathbb{C}$, we let $z=x+iy$, where $x,y \in \mathbb{R}$, and define $\zeta:= e^{2\pi i z}$. Furthermore, for any $w\in \mathbb{C}$, we denote $e(w):=e^{2\pi i w}$.
\begin{definition} \label{defi:5}
A function $\phi :\mathbb{C}\times \mathbb{H}\to \mathbb{C}$ which is holomorphic in both of the variables is called a holomorphic Jacobi form of weight $k\in \Z$ and index $M \in \mathbb{N}$ for $\SL_2(\Z)$ if the following conditions hold:
\begin{enumerate} 
\item For all $\lambda$, $\mu \in \Z$,
\begin{equation}  \label{eq:5}
\phi \left(z+\lambda \tau +\mu;\tau\right)=e\rl{-M\rl{\lambda ^2 \tau +2\lambda z}}\phi\left(z;\tau\right).
\end{equation}
\item For all $\gamma= 
\bigl(
\begin{smallmatrix}
    a & b  \\
    c & d
  \end{smallmatrix}
\bigr) \in \SL_2(\Z)$,
\begin{equation}  
\phi \rl{\frac{z}{c\tau +d};\gamma \tau}=(c\tau +d)^k e\rlf{Mcz^2}{c\tau +d}\phi \rl{z;\tau}.
\end{equation}
\item The function $\phi$ has a Fourier development of the form 
\[
\sum_{n\geq \frac{r^2}{4M}}c(n,r)q^n \zeta^r .
\]
\end{enumerate}
\end{definition}
\begin{remark}
One can extend the definition in the usual way to include Jacobi forms  with multipliers, as well as half-integral weight, for congruence subgroups, or vector-valued Jacobi forms.
\end{remark}
As mentioned in the introduction, one of the most interesting properties of a holomorphic Jacobi form is that it can be written as 
\begin{equation} \label{eq:7}
\varphi(z;\tau)=\sum_{\ell \pmod{2M}}h_{\ell}(\tau)\vartheta_{M,\ell}(z;\tau),
\end{equation}
where 
\begin{equation} \label{eq:8}
\theta_{M,\ell}(z;\tau):=\sum_{\substack{\lambda \in \Z \\  \lambda \equiv \ell \pmod{2M}}}q^{\frac{\lambda^2}{4M}}\zeta^{\lambda}, 
\end{equation}
and 
\begin{equation} \label{eq:9}
h_{\ell}(\tau):= q^{-\frac{\ell^2}{4M}}\int\limits_{0}^{1}\phi(z;\tau)\zeta^{-\ell}dz
\end{equation}
is the $\ell$-th component of a vector-valued modular form. Notice that this decomposition essentially follows from the fact that $\phi$ is holomorphic and satisfies the elliptic transformation law, which implies that $h_{\ell}$ only depend on $\ell$ modulo $2M$. The modularity of $h_\ell$ comes from the modularity of both $\phi$ and $\theta_{M,\ell}$.

One can also consider holomorphic functions as in Definition \ref{defi:5} that satisfy the first and the third properties, but no longer the modular one. Instead, they satisfy a weaker modular condition that we next explain. From \eqref{eq:5} and from the holomorphicity it follows that $\phi$ still allows a theta decomposition as in \eqref{eq:7}. We call $\phi$ a \textit{mock Jacobi form} if its Fourier coefficients $h_{\ell}$ are mock modular forms. In particular one can consider its (non-holomorphic) completion defined as a theta decomposition, where the mock modular forms $h_{\ell}$ are replaced by their associated harmonic weak Maass forms $\widehat{h}_{\ell}$. In this way the function
\[
\widehat{\phi}(z;\tau):=\sum_{\ell \pmod{2M}}\widehat{h}_{\ell}(\tau)\theta_{M,\ell}(z;\tau)
\]
satisfies both the transformation laws of a Jacobi form.

Important examples of mock Jacobi forms come from the so-called Appell functions, well described by Zwegers \cite{Zwegers1}, \cite{Zwegers:PhD}. Here we recall two of such important examples needed in our work.

For all positive integers $n$ we define the \textit{multivariable Appell function} by
\begin{equation} 
\mu_n (u,\textbf{v};\tau):=\frac{e^{\pi i u}}{\prod_{j=1}^{n}\theta(v_j;\tau)}
\sum_{\textbf{k}\in \Z^n}\frac{(-1)^{|\textbf{k}|} q^{\frac{1}{2}\| \textbf{k} \|^2+\frac{1}{2}| \textbf{k} |}   e\rl{\textbf{k} \cdot \textbf{v}} }{1-e(u)q^{|\textbf{k}|}} ,
\end{equation}
where $u\in \mathbb{C}\smallsetminus (\Z\tau+\Z)$, $\textbf{v}=(v_j)_{j}\in \rl{\mathbb{C}^{\times}}^n$, $\tau \in \mathbb{H}$, and
where we denote $|\textbf{k}|:=\sum_{j=1}^{n}k_j$ and $\|\textbf{k}\|^2:=\sum_{j=1}^{n}k_j ^2$.
Furthermore, for all $0\leq \ell \leq n-1$ we define the \textit{shifted multivariable Appell functions} by
\begin{equation} \label{eq:18}
\mu_{n,\ell}\rl{u,\textbf{v};\tau}:=(-1)^{\ell}q^{-\frac{\ell^2}{2n}}e\rl{-\frac{\ell}{n}(u-|\textbf{v}|)}\mu_{n}\rl{u+\ell\tau,\textbf{v};\tau}.
\end{equation}

As described in \cite{Zwegers1}, $\left(  \mu_{n,\ell} \right)_{\ell=0,\dots,n} $ is a vector-valued multivariable mock Jacobi form, whose completion is given by
\[
\widehat{\mu}_{n,\ell}\rl{u,\textbf{v};\tau}:=\mu_{n,\ell}\rl{u,\textbf{v};\tau}-\frac{i}{2}R\left( u-|\textbf{v}| -\frac{n+1}{2};n\tau \right),
\]
where the real-analytic function $R$ is defined by 
\[
R(u;\tau):=\sum_{\nu \in \frac{1}{2}+\Z} \left\lbrace  \sgn(\nu)-E\left(  \left( \nu+\frac{\im(u)}{v} \right)\sqrt{2v}  \right) \right\rbrace (-1)^{\nu-\frac{1}{2}}q^{-\frac{\nu^2}{2}}e(\nu u), 
\]
with $E(t):=2\int\limits_{0}^{t}e^{-\pi u^2}du$.
The following proposition states the modular and elliptic transformation laws for $\widehat{\mu}_{n,\ell}$ as described by Zwegers in \cite{Zwegers1}.
\begin{proposition} \label{pro:8}
Let $u\in \mathbb{C}\smallsetminus (\Z\tau+\Z)$, $\textbf{v}=(v_j)_{j}\in \rl{\mathbb{C}^{\times}}^n$, $\tau \in \mathbb{H}$, and assume $\lambda_1$, $\nu_1 \in \Z$ and $\boldsymbol\lambda_2$, $\boldsymbol\nu_2 \in \Z^n$ such that $\lambda_1-|\boldsymbol\lambda_2|\in n\Z$. The following are true:
\[
\begin{array}{llcl}
(1)&\widehat{\mu}_n(u,\textbf{v};\tau)&=&(-1)^{\lambda_1+|\boldsymbol\lambda_2|+\nu_1+|\boldsymbol\nu_2|}\e^{-\frac{2\pi i}{n}(\lambda_1-|\boldsymbol\lambda_2|)(u-|\textbf{v}|)}q^{-\frac{1}{2n}(\lambda_1-|\boldsymbol\lambda_2|)^2} \\
&&&\times \widehat{\mu}_n(u+\lambda_1\tau+\nu_1,\textbf{v}+\boldsymbol\lambda_2\tau+\boldsymbol\nu_2;\tau), \\
\\
(2)&\widehat{\mu}_{n,\ell}(u,\textbf{v};\tau+1)&=&\e^{-\frac{\pi i}{n}\rl{\ell -\frac{n}{2}}}\widehat{\mu}_{n,\ell}(u,\textbf{v};\tau), \\
\\
 (3)&\widehat{\mu}_{n,\ell}\rl{\frac{u}{\tau},\frac{\textbf{v}}{\tau};-\frac{1}{\tau}}&=&i^{n+1}\sqrt{\frac{-i\tau}{n\tau}}\e^{-\frac{\pi i}{n\tau}(u-|\textbf{v}|)^2}\sum_{r \pmod{n}}\e^{\frac{2\pi i r\ell}{n}} \widehat{\mu}_{n,r}\rl{u,\textbf{v};\tau}.
\end{array}
\]
\end{proposition}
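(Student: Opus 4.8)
The three transformation laws are the multivariable, level-$n$ analogues of Zwegers's single-variable results for the $\mu$-function, and the plan is to reduce each of them to that case. Throughout I would work from the series definition of $\mu_n$ and exploit the crucial structural feature that the denominator $1-e(u)q^{|\mathbf{k}|}$ depends on $\mathbf{k}$ only through $m:=|\mathbf{k}|$. This lets me split the defining sum as $\sum_{m\in\Z}\frac{(-1)^m q^{m/2}}{1-e(u)q^m}\,\Theta_m(\mathbf{v};\tau)$, where $\Theta_m(\mathbf{v};\tau):=\sum_{|\mathbf{k}|=m}q^{\|\mathbf{k}\|^2/2}e(\mathbf{k}\cdot\mathbf{v})$ is a restricted theta sum supported on the affine hyperplane $|\mathbf{k}|=m$. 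Diagonalizing $\|\mathbf{k}\|^2$ on this hyperplane separates off a Gaussian factor $q^{m^2/(2n)}$ and a lattice ($A_{n-1}$-type) theta function in $\mathbf{v}$, and thereby exhibits $\mu_n$ as a single-variable Appell function of level $n$ times a theta quotient. All three statements then follow from the corresponding single-variable facts, the elliptic and modular transformations of the classical theta function \eqref{eq:8}, and the known transformations of the completing function $R$.

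For the elliptic law (1), I would verify it directly on the series: a shift $u\mapsto u+\lambda_1\tau+\nu_1$ is absorbed by re-indexing $m\mapsto m+\lambda_1$ (using the $q^{\|\mathbf{k}\|^2/2}$-bookkeeping), while $\mathbf{v}\mapsto \mathbf{v}+\boldsymbol\lambda_2\tau+\boldsymbol\nu_2$ is absorbed by a componentwise shift of $\mathbf{k}$ together with the elliptic transformation of each $\theta(v_j;\tau)$ in the denominator. The hypothesis $\lambda_1-|\boldsymbol\lambda_2|\in n\Z$ is exactly what makes the two re-indexings compatible and keeps the shifted exponents integral. Collecting the resulting prefactors reproduces the sign $(-1)^{\lambda_1+|\boldsymbol\lambda_2|+\nu_1+|\boldsymbol\nu_2|}$ together with the exponential and $q$-power in the argument $u-|\mathbf{v}|$; it then remains to check that the non-holomorphic term $R(u-|\mathbf{v}|-\tfrac{n+1}{2};n\tau)$ transforms by the identical factor under $(u-|\mathbf{v}|)\mapsto(u-|\mathbf{v}|)+(\lambda_1-|\boldsymbol\lambda_2|)\tau+(\nu_1-|\boldsymbol\nu_2|)$, which is Zwegers's elliptic transformation for $R$ at level $n$. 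Because the correction is built from the single combination $u-|\mathbf{v}|$, this matching is automatic.

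The transformation (2) under $\tau\mapsto\tau+1$ is the routine modular case. Here $q$ is invariant, so the only effect comes from the half-integral $q$-powers $q^{\|\mathbf{k}\|^2/2+|\mathbf{k}|/2}$ and from the prefactor $q^{-\ell^2/(2n)}$ in the definition \eqref{eq:18} of $\mu_{n,\ell}$, each contributing a root of unity, together with the simple multiplier of $\theta(v_j;\tau+1)$. Assembling these gives the stated scalar $e^{-\pi i(\ell-n/2)/n}$, and the correction $R(\,\cdot\,;n\tau)$ transforms compatibly under $n\tau\mapsto n\tau+n$ by the $T^n$-behavior of $R$.

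The real work is (3), the transformation under $\tau\mapsto -1/\tau$, and this is the step I expect to be the main obstacle. Applying $S$ to the theta quotient produced by the hyperplane decomposition introduces the automorphy factor $\sqrt{-i\tau}$ for each theta together with a finite Gauss sum, while the $S$-transform of a level-$n$ Appell function is naturally a sum over the $n$ residue classes $r\pmod n$; the theta $S$-transformation then supplies the weights $e^{2\pi i r\ell/n}$ and the overall constant $i^{n+1}\sqrt{\frac{-i\tau}{n\tau}}$. The analytically delicate point is the $S$-transformation of the holomorphic Appell sum itself, which is \emph{not} modular on its own: its modular defect is a period integral that is cancelled exactly by the non-holomorphic function $R$. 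Thus the key lemma to invoke is Zwegers's $S$-transformation of $R$, whose proof rests on the transformation formula for the Mordell integral (equivalently, Poisson summation applied to the Gaussian kernel defining $E$). Only after adding $-\tfrac{i}{2}R$ do the period-integral terms telescope, leaving the clean vector-valued law of (3). I would therefore isolate the single-variable completed object $\widehat{\mu}$, quote its $S$-transformation from \cite{Zwegers1}, and combine it with the theta $S$-transformation and the Gauss-sum evaluation; the remaining verification that all prefactors agree is a finite, if lengthy, calculation.
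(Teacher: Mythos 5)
You cannot really be compared against the paper's own argument here, because the paper does not give one: Proposition \ref{pro:8} is stated without proof and is imported from Zwegers's work --- the sentence introducing it reads ``as described by Zwegers in \cite{Zwegers1}''. So the honest comparison is with that cited source, and your sketch is essentially a reconstruction of the argument that lives there. Its structural core is exactly the observation you lead with: since the denominator $1-e(u)q^{|\textbf{k}|}$ sees $\textbf{k}$ only through $m=|\textbf{k}|$, one may write $\mu_n$ as $\frac{e^{\pi i u}}{\prod_j \theta(v_j;\tau)}\sum_{m\in\Z}\frac{(-1)^m q^{m/2}}{1-e(u)q^m}\,\Theta_m(\textbf{v};\tau)$ with your restricted theta sum $\Theta_m$, and on the hyperplane $|\textbf{k}|=m$ the splitting $\|\textbf{k}\|^2=\frac{m^2}{n}+\|\textbf{k}-\frac{m}{n}\textbf{1}\|^2$ factors out the Gaussian $q^{m^2/(2n)}$ and an $A_{n-1}$-type theta series depending only on $m\pmod{n}$; regrouping $m$ by residue class modulo $n$ exhibits $\mu_n$ as a theta quotient times level-$n$ Appell sums in the single combination $u-|\textbf{v}|$, which is also why the nonholomorphic correction sits at modulus $n\tau$. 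Given that reduction, (1) and (2) are finite bookkeeping and (3) reduces to the $S$-law of the thetas plus the $S$-law of the completed level-$n$ Appell object. This is sound and is the right architecture.

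Two points must be tightened before this is a proof. First, in (1) the matching of the correction term is not ``automatic'': $R$ is only quasi-elliptic ($R(u+1)=-R(u)$, while shifting the argument by the modulus produces an \emph{inhomogeneous} elementary term), and correspondingly the re-indexing $m\mapsto m+\lambda_1$ of the holomorphic sum leaves finitely many boundary terms; the content of (1) is precisely that these two families of inhomogeneous terms cancel, and that finite computation has to be carried out --- depending on the single variable $u-|\textbf{v}|$ alone does not force it. Second, your plan for (3) proposes to quote ``the $S$-transformation of the single-variable completed $\widehat{\mu}$ from \cite{Zwegers1}'' --- but \cite{Zwegers1} is the very result being reconstructed, so as stated this is circular. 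What you may legitimately import is the level-one theory of $R$ and the Mordell integral from \cite{Zwegers:PhD}, applied at modulus $n\tau$; the $S$-transformation of the completed level-$n$ Appell sum must then be \emph{derived} from it (Poisson summation and the Gauss-sum evaluation producing the constant $i^{n+1}\sqrt{\frac{-i\tau}{n\tau}}$ and the weights $e^{\frac{2\pi i r\ell}{n}}$), not assumed. With those two repairs your outline does prove the proposition; as far as this paper is concerned, however, the statement is a quotation, not something it establishes.
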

The second function we need to describe is 
\begin{equation} \label{eq:17}
f^{(M)}_{w}(z;\tau):=\sum_{\alpha \in \Z}(-1)^{2M\alpha} \frac{q^{M\alpha^2}e(2M\alpha z)}{1-e(z-w)q^{\alpha}},
\end{equation}
where $M\in \frac{1}{2}\Z$, and $z,w\in \mathbb{C}$ such that $z-w \notin \Z\tau+\Z$.
Defining for all $\ell \in \Z$ the function 
\begin{equation} \label{eq:15}
R_{M,\ell}(w;\tau):=-i e(w(M-\ell))q^{-\frac{(\ell -M)^2}{4M}}R\rl{2Mw-\frac{1}{2}+\tau(\ell -M);2M\tau},
\end{equation}
it turns out that its completion
\begin{equation}  \label{eq:14}
\widehat{f}^{(M)}_{w}(z;\tau):=f^{(M)}_{w}(z;\tau)-\frac{1}{2}\sum_{\ell \pmod{2M}}R_{M,\ell}(w;\tau)\theta_{M,\ell}(z;\tau)
\end{equation}
transforms like a $2$--variable Jacobi form of index $\bigl(
\begin{smallmatrix}
    M & 0  \\
    0 & -M
  \end{smallmatrix}
\bigr)$ and weight $1$ for $\SL_{2}(\Z)$ \cite{Zwegers:PhD}.
\begin{proposition} \label{pro:6}
The function $\widehat{f}^{(M)}_{w}(z;\tau)$ satisfies the following transformation properties:
\begin{enumerate}
\item For all $\gamma=\bigl(
\begin{smallmatrix}
    a & b  \\
    c & d
  \end{smallmatrix}
\bigr) \in \SL_2(\Z) $
\[
\widehat{f}_{\frac{w}{c\tau +d}}^{(M)}\rl{\frac{z}{c\tau +d};\gamma\tau}=(c\tau+d)e\rlf{Mc\rl{z^2-w^2}}{c\tau +d}\widehat{f}_{w}^{(M)}\rl{z;\tau}.
\]

\item For all $\lambda$, $\mu \in \Z$ 
\[
\widehat{f}_{w}^{(M)}\rl{z+\lambda \tau +\mu;\tau}=e\rl{-M\rl{\lambda^2\tau+2\lambda z}}\widehat{f}_{w}^{(M)}\rl{z;\tau},
\]
and
\[
\widehat{f}_{w+\lambda \tau +\mu}^{(M)}\rl{z;\tau}=e\rl{M\rl{\lambda^2\tau+2\lambda w}}\widehat{f}_{w}^{(M)}\rl{z;\tau}.
\]
\end{enumerate}
\end{proposition}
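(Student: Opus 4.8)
The plan is to treat the two transformation laws separately, reducing everything to the known behaviour of Zwegers's real-analytic function $R$ and of the theta functions $\theta_{M,\ell}$, exploiting the clean splitting $\widehat{f}^{(M)}_w = f^{(M)}_w - \tfrac12\sum_{\ell\pmod{2M}} R_{M,\ell}(w;\tau)\,\theta_{M,\ell}(z;\tau)$ of the completion into a meromorphic Appell-type sum and a finite theta-correction. The elliptic law (2) I expect to be essentially elementary, while the modular law (1) carries the real content.

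For the elliptic transformations I would compute directly. Under $z\mapsto z+\lambda\tau+\mu$ one checks, via the index shifts $\alpha\mapsto\alpha-\lambda$ in $f^{(M)}_w$ and $\lambda'\mapsto\lambda'-2M\lambda$ in each $\theta_{M,\ell}$, that $f^{(M)}_w$ and every $\theta_{M,\ell}$ pick up the identical factor $e(-M(\lambda^2\tau+2\lambda z))$; since $R_{M,\ell}(w;\tau)$ is independent of $z$, the first identity in (2) follows at once. The $w$-periodicity is subtler, because $f^{(M)}_w$ alone is not quasi-periodic in $w$: shifting $w\mapsto w+\lambda\tau+\mu$ shifts the first argument of $R$ in \eqref{eq:15} by $\lambda\cdot(2M\tau)$, and the elliptic transformation law of $R(\,\cdot\,;2M\tau)$ in its first variable supplies exactly the factor that the bare sum fails to produce, yielding the required $e(M(\lambda^2\tau+2\lambda w))$. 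Here I would lean on Zwegers's elliptic transformation formula for $R$.

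For the modular law I would reduce to the generators $T\colon\tau\mapsto\tau+1$ and $S\colon\tau\mapsto-1/\tau$ of $\SL_2(\Z)$. The case of $T$ (where $c=0$, $d=1$, so the claimed automorphy factor is trivial) amounts to matching the root of unity from the $T$-transformation of $R$ against the $\tau\mapsto\tau+1$ behaviour of $\theta_{M,\ell}$ and of the $q$-powers in $f^{(M)}_w$. The case of $S$ is the heart of the matter: $f^{(M)}_w$ is a mock object and does not transform by itself, so I would compute its modular defect under $S$ (a Mordell-type integral) and show that the $S$-transformation of the correction $\tfrac12\sum_\ell R_{M,\ell}\theta_{M,\ell}$ — obtained by combining Zwegers's $S$-transformation of $R$ with the standard Gauss-sum transformation of the vector $(\theta_{M,\ell})_{\ell\pmod{2M}}$ — produces precisely the same defect, so that the two cancel and leave the clean factor $(c\tau+d)\,e(Mc(z^2-w^2)/(c\tau+d))$.

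The main obstacle is exactly this cancellation in the $S$-transformation: one must verify that the non-holomorphic, non-modular anomaly of the Appell--Lerch sum $f^{(M)}_w$ is matched termwise by the theta-weighted transform of $R$, which requires careful bookkeeping of the argument $2Mw-\tfrac12+\tau(\ell-M)$ and of the level $2M\tau$. A cleaner alternative, which I would pursue in parallel, is to identify $f^{(M)}_w$ and its completion with a specialization (or a finite residue-class combination) of Zwegers's multivariable Appell function, so that the whole statement follows formally from Proposition \ref{pro:8}; in that route the labor shifts entirely to establishing the identification and to reconciling the $R(\,\cdot\,;2M\tau)$ appearing in \eqref{eq:15} with the completion term of $\widehat{\mu}_{n}$.
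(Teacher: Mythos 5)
There is no in-paper proof of Proposition \ref{pro:6} to compare against: the paper imports the statement as a known result, asserting the transformation behaviour of $\widehat{f}^{(M)}_{w}$ by citation to Zwegers's thesis \cite{Zwegers:PhD} in the sentence introducing \eqref{eq:14} (this is also the source used in \cite{DMZ}). What your sketch reconstructs is therefore essentially Zwegers's own argument, and the structure you lay out is sound. The $z$-elliptic law is indeed the elementary re-indexing computation you describe, since $R_{M,\ell}(w;\tau)$ carries no $z$-dependence. For the $w$-law, one correction of emphasis: the failure of the bare sum \eqref{eq:17} under $w\mapsto w+\lambda\tau+\mu$ is \emph{additive}, not a multiplicative factor --- after re-indexing $\alpha\mapsto\alpha+\lambda$ one finds the expected factor $e\rl{M\rl{\lambda^2\tau+2\lambda w}}$ times $f^{(M)}_w$ \emph{plus} a finite combination of theta-type series, coming from the geometric-sum identity $\frac{1-X^{2M}}{1-X}=1+X+\cdots+X^{2M-1}$ with $X=e(z-w)q^{\alpha}$ --- and it is these leftover theta terms that the inhomogeneous part of Zwegers's shift law for $R$ (which expresses $R(u+\tau';\tau')$ in terms of $R(u;\tau')$ plus an exponential term) must absorb after pairing with $\theta_{M,\ell}(z;\tau)$; your word ``factor'' glosses over this, but the cancellation mechanism you invoke is the right one. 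The modular part is likewise correctly located: $T$-invariance is a root-of-unity check, and under $S$ both the Appell--Lerch sum and the theta-weighted $R$-correction produce the same Mordell-integral defect, which cancels in \eqref{eq:14}. Finally, your alternative route --- realizing $\widehat{f}^{(M)}_w$ inside the multivariable Appell framework so that Proposition \ref{pro:8} carries the modular burden --- is viable and is actually closer in spirit to what this paper does elsewhere: in Section 6 the author introduces $\rho^{(s)}_{M,\ell}$ through $\mu_{2M,\ell-M}$, and Remark \ref{rem:1} records exactly the reconciliation between the completion term of $\widehat{\mu}_{2M,\ell-M}$ and the function $R_{M,\ell}$ of \eqref{eq:15} that you identify as the remaining labor in that route.
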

\subsection{Meromorphic Jacobi forms} \label{subsec:3}
As stated before, a meromorphic Jacobi form $\phi$ is a function that satisfies both of the transformation properties in Definition \ref{defi:5}, but that is meromorphic in the elliptic variable $z$, and weakly holomorphic in $\tau$, i.e., meromorphic at infinity. Here we fix the notation and describe the basic properties of the poles of $\phi$.

For each fixed $\tau \in \mathbb{H}$ denote by $S^{(\tau)}$ the set of poles of $z\mapsto \phi(z;\tau)$. Notice that this set has a nice symmetric structure. Indeed, from the elliptic transformation property of $\phi$ it follows that each pole in $S^{(\tau)}$ is equivalent to a pole in $S_0^{(\tau)}:=S^{(\tau)}\cap P$ after translating by $\Z\tau +\Z$, where $P:=[0,1)\tau +[0,1)$. Moreover, since $P$ is bounded and $\phi$ is meromorphic, $S_0^{(\tau)}$ is finite. We let also
\begin{equation} \label{eq:19}
\mathbb{S}^{(\tau)}:=\left\{ (\alpha,\beta)\in \Q^2 : \alpha \tau+\beta\in S^{(\tau)} \right\},
\end{equation}
and for each $s\in \mathbb{S}^{(\tau)}$ denote the relative pole by $z_s(\tau)=z_s\in S^{(\tau)}$. Finally we define $\mathbb{S}_0^{(\tau)}$ by replacing  $S^{(\tau)}$ by $S_0^{(\tau)}$ in \eqref{eq:19}. 

For each $\gamma=\bigl(
\begin{smallmatrix}
    a & b  \\
    c & d
  \end{smallmatrix}
\bigr) \in \SL_2(\Z) $ and for each $s \in \mathbb{S}^{(\gamma \tau)}$, one has the relation
\[
z_s(\gamma\tau)=\frac{z_{s\gamma}(\tau)}{c\tau+d},
\]
which implies
\begin{equation} \label{eq:20}
S^{(\tau)}=(c\tau+d)S^{(\gamma \tau)}
\end{equation}
and
\begin{equation} \label{eq:21}
\mathbb{S}^{(\gamma \tau)}\gamma=\mathbb{S}^{(\tau)}.
\end{equation}
For each Jacobi form of weight $k$ and index $M$ on $\SL_2(\Z)$, and for each $\alpha$ and $\beta\in \Q$, Theorem 1.3 of \cite{MR781735} implies that  the function $q^{m\alpha^2}\phi(\alpha\tau+\beta;\tau)$ is a modular form of weight $k$ on the finite index subgroup
\[
\Gamma_{\alpha,\beta}:=\left\{ \bigl(
\begin{smallmatrix}
    a & b  \\
    c & d
  \end{smallmatrix}
\bigr) \in \SL_2(\Z) : (a-1)\alpha+c\beta,\ b\alpha+(d-1)\beta,\ m\left(c\beta^2-b\alpha^2+(d-a)\alpha\beta \right) \in \Z \right\}
\]
of $\SL_2(\Z)$. Therefore, if we define the finite index subgroup $\Gamma_{\phi}$ of $\SL_2(\Z)$ by
\[
\Gamma_{\phi}:=\bigcap_{(\alpha,\beta)\in \mathbb{S}_0^{(\tau)}}\Gamma_{\alpha,\beta},
\]
then for all $\gamma \in \Gamma_{\phi}$, and for each $s\in \mathbb{S}^{(\tau)}$, $z_{s}(\gamma \tau)\in S^{(\gamma \tau)}$. This fact, together with \eqref{eq:21}  and the modular law of $\phi$, implies that $\mathbb{S}^{(\tau)}$ is $\Gamma_{\phi}$-invariant (for right multiplication). In fact, it is straightforward to prove that for each $\gamma \in \Gamma_{\phi}$, the map
\begin{displaymath}
\xymatrix{
\mathbb{S}_0^{(\tau)} \ar[r] & \mathbb{S}^{(\tau)}\ar[r] & \mathbb{S}_0^{(\tau)} \\
s \ar[r] & s\gamma \ar[r] & s\gamma \pmod{\Z^2} }
\end{displaymath}
is the identity map.

\section{The function $F^{(s)}(\epsilon;\tau)$}
In this section we describe the function $F^{(s)}(\epsilon;\tau)$, which plays a fundamental role in our work for two reasons: firstly, it relates the Laurent coefficients of a meromorphic Jacobi form to some almost holomorphic modular form, whose non-holomorphic parts can be given as a linear combination of the Laurent coefficients themselves. Secondly, it allows us to relate the image of a certain class of functions under the differential operator $\frac{\partial}{\partial \epsilon}$ to the image under the Maass raising operator.

For $s=(\alpha,\beta)\in \Q^2$, $\epsilon\in \mathbb{C}$, and $\tau \in \mathbb{H}$ we define
\[
F^{(s)}(\epsilon;\tau):=e^{\frac{M\pi \epsilon^2}{v}}e(M\alpha\beta+2M\alpha\epsilon)q^{M\alpha^2}.
\]
Notice that $F^{(s)}$ is holomorphic in $\epsilon$ and non-holomorphic in $\tau$.
\begin{remark}
The function $e^{\frac{M\pi \epsilon^2}{v}}=F^{(0,0)}(\epsilon;\tau)$ appears in \cite{BF2}. In fact, $s$ represents an element of $S_0^{(\tau)}$, and the function studied in \cite{BF2} has a unique pole $0\in S_0^{(\tau)}$.
\end{remark}
A straightforward computation gives the following transformation properties for $F^{(s)}$.
\begin{lemma} \label{lem:1}
The function $F^{(s)}\rl{\epsilon;\tau}$ satisfies the following transformation laws:
\begin{enumerate}
\item For all $\gamma=\bigl(
\begin{smallmatrix}
    a & b  \\
    c & d
  \end{smallmatrix}
\bigr) \in \SL_2(\Z)$
\begin{equation*}
F^{(s)}\rl{\frac{\epsilon}{c\tau +d};\gamma\tau}=e\rl{-\frac{cM}{c\tau+d}\rl{z_{s\gamma}+\epsilon}^2}F^{(s\gamma)}\rl{\epsilon;\tau}.
\end{equation*}
\item For all $\lambda$, $\mu \in \Z$
\begin{equation*}
F^{(s+(\lambda,\mu))}\rl{\epsilon;\tau}=e\rl{M\rl{\alpha\mu-\beta\lambda}}
e\rl{M\rl{\lambda^2\tau+2\lambda(z_s+\epsilon)}}F^{(s)}(\epsilon;\tau).
\end{equation*}
\end{enumerate}
\end{lemma}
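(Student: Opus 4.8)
The plan is to verify both identities by direct substitution into the definition of $F^{(s)}$, handling the two claims separately. Throughout I write $N:=c\tau+d$ and $s=(\alpha,\beta)$, and recall that $z_s=\alpha\tau+\beta$, while $s\gamma=(\alpha a+\beta c,\ \alpha b+\beta d)=:(\alpha',\beta')$, so that $z_{s\gamma}=\alpha'\tau+\beta'$.

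I would dispose of part (2) first, as it is purely formal. Substituting $s+(\lambda,\mu)=(\alpha+\lambda,\beta+\mu)$ into the definition and dividing by $F^{(s)}(\epsilon;\tau)$, the Gaussian factor $e^{M\pi\epsilon^2/v}$ cancels and what remains is the product of the three exponentials coming from $M\big[(\alpha+\lambda)(\beta+\mu)-\alpha\beta\big]$, $2M\lambda\epsilon$, and $M(2\alpha\lambda+\lambda^2)\tau$. Expanding these and inserting $z_s=\alpha\tau+\beta$ reproduces the claimed factor $e\rl{M\rl{\alpha\mu-\beta\lambda}}e\rl{M\rl{\lambda^2\tau+2\lambda(z_s+\epsilon)}}$; the only thing to observe is that the leftover term $e(M\lambda\mu)$ equals $1$ since $M\lambda\mu\in\Z$.

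Part (1) is where the single genuine subtlety lies, namely the behaviour of the non-holomorphic Gaussian factor. Using $\im(\gamma\tau)=v/|N|^2$ and $(\epsilon/N)^2=\epsilon^2/N^2$, the factor $\exp\big(M\pi(\epsilon/N)^2/\im(\gamma\tau)\big)$ becomes $\exp\big(\tfrac{M\pi\epsilon^2}{v}\cdot\tfrac{|N|^2}{N^2}\big)$. The key manipulation is the identity $\tfrac{|N|^2}{N^2}=\tfrac{c\bar\tau+d}{N}=1-\tfrac{2icv}{N}$, which follows from $\bar\tau-\tau=-2iv$; this splits the factor into the correct Gaussian $e^{M\pi\epsilon^2/v}$ appearing in $F^{(s\gamma)}(\epsilon;\tau)$ times the purely holomorphic exponential $e\rl{-Mc\epsilon^2/N}$. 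After this split the entire statement reduces to an identity among ordinary exponentials $e(\cdot)$, and this Gaussian split is the only non-routine step.

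It then remains to match the arguments of the $e(\cdot)$-factors on the two sides, which I would organize by collecting them as polynomials in $\epsilon$. The $\epsilon^2$-coefficients agree at once, both equalling $-Mc/N$. For the coefficient of $\epsilon^1$ the relevant identity is $d\alpha'-c\beta'=\alpha$, which follows from $ad-bc=1$; this matches the contribution $2M\alpha\epsilon/N$ on both sides. For the $\epsilon^0$-term one combines the terms of $F^{(s\gamma)}$ together with the automorphy factor and factors the resulting bracketed expression as $(d\alpha'-c\beta')(\alpha'\tau+\beta')$, then uses $\alpha'\tau+\beta'=\alpha(a\tau+b)+\beta N$ and again $d\alpha'-c\beta'=\alpha$ to obtain $\tfrac{M\alpha^2(a\tau+b)}{N}+M\alpha\beta$, which is exactly the constant term produced by $F^{(s)}\rl{\epsilon/N;\gamma\tau}$. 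Since all three coefficients match identically, the two sides coincide and the lemma follows.
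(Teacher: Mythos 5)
Your proof is correct and takes the same (indeed, the only intended) route: the paper gives no proof of Lemma \ref{lem:1}, introducing it merely as ``a straightforward computation,'' and your direct substitution into the definition is exactly that computation. The genuine points --- splitting the Gaussian factor via $\frac{|c\tau+d|^2}{(c\tau+d)^2}=1-\frac{2icv}{c\tau+d}$, invoking $d\alpha'-c\beta'=(ad-bc)\alpha=\alpha$ for the linear and constant terms, and discarding $e(M\lambda\mu)=1$ in part (2) --- are all handled correctly, and the resulting identity of exponents is exact, with no mod-$\Z$ ambiguity left over.
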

Let $\phi$ be a meromorphic Jacobi form, and denote by $z_s=\alpha\tau+\beta$ one of its poles, where $s=(\alpha,\beta)\in \Q^2$. Then we define the Laurent coefficients $\widetilde{D}_j^{(s)}$ of $\phi$ relative to $z_s$ by
\[
\phi(\epsilon+z_s;\tau)=\sum\limits_{j=1}^{n_s}\frac{\widetilde{D}_j^{(s)}(\tau)}{(2\pi i \epsilon)^j}+O(1),
\]
where $n_s$ denotes the order of the pole. Furthermore, we define the functions $D^{(s)}_j$ as the Laurent coefficients of $F^{(s)}\phi$ in the elliptic variable, namely
\begin{equation} \label{eq:22}
F^{(s)}(\epsilon;\tau)\phi(\epsilon+z_s;\tau)=\sum\limits_{j=1}^{n_s}\frac{D_j^{(s)}(\tau)}{(2\pi i \epsilon)^j}+O(1).
\end{equation}
\begin{proposition} \label{pro:3}
The functions $D_j^{(s)}(\tau)$, defined in \eqref{eq:22}, are almost holomorphic  modular forms of weight $k-j$ on $\Gamma_{\phi}$. Furthermore, their holomorphic part is uniquely determined by the $\widetilde{D}_{\lambda}^{(s)}(\tau)$'s.
\end{proposition}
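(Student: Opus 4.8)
The plan is to promote the transformation behaviour of $F^{(s)}$ (Lemma \ref{lem:1}) to the product $G^{(s)}(\epsilon;\tau):=F^{(s)}(\epsilon;\tau)\phi(\epsilon+z_s;\tau)$, whose principal-part coefficients in $\epsilon$ are exactly the $D_j^{(s)}$ by \eqref{eq:22}, and then to read off both assertions by comparing Laurent coefficients. For $\gamma=\bigl(\begin{smallmatrix} a & b \\ c & d\end{smallmatrix}\bigr)\in\SL_2(\Z)$ I would first use $z_s(\gamma\tau)=z_{s\gamma}(\tau)/(c\tau+d)$ to rewrite $\tfrac{\epsilon}{c\tau+d}+z_s(\gamma\tau)=\tfrac{\epsilon+z_{s\gamma}}{c\tau+d}$, and then apply the modular law of $\phi$ from Definition \ref{defi:5} (with $z=\epsilon+z_{s\gamma}$) together with Lemma \ref{lem:1}(1). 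The crucial point is that the quadratic exponential $e(-\tfrac{cM}{c\tau+d}(z_{s\gamma}+\epsilon)^2)$ coming from $F^{(s)}$ and the index exponential $e(\tfrac{Mc(\epsilon+z_{s\gamma})^2}{c\tau+d})$ coming from $\phi$ are exact inverses, so they cancel and leave the clean relation
\[
G^{(s)}\!\left(\tfrac{\epsilon}{c\tau+d};\gamma\tau\right)=(c\tau+d)^{k}\,G^{(s\gamma)}(\epsilon;\tau).
\]

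Next I would restrict to $\gamma\in\Gamma_{\phi}$. Since $s\mapsto s\gamma\pmod{\Z^2}$ is the identity on $\mathbb{S}_0^{(\tau)}$, we have $s\gamma=s+(\lambda,\mu)$ for some $(\lambda,\mu)\in\Z^2$ depending on $\gamma$, whence $z_{s\gamma}=z_s+\lambda\tau+\mu$. Feeding Lemma \ref{lem:1}(2) and the elliptic law \eqref{eq:5} for $\phi$ (evaluated at $z=\epsilon+z_s$) into $G^{(s\gamma)}$ makes the index exponentials cancel once more, giving $G^{(s\gamma)}(\epsilon;\tau)=e(M(\alpha\mu-\beta\lambda))\,G^{(s)}(\epsilon;\tau)$. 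Substituting this into the displayed relation and expanding both sides as Laurent series in $\epsilon$ — using that $\epsilon\mapsto\epsilon/(c\tau+d)$ turns $(2\pi i\epsilon)^{-j}$ into $(c\tau+d)^{j}(2\pi i\epsilon)^{-j}$, and that the $O(1)$ remainders feed only the constant terms — the comparison of the coefficient of $(2\pi i\epsilon)^{-j}$ yields
\[
D_j^{(s)}(\gamma\tau)=(c\tau+d)^{k-j}\,e(M(\alpha\mu-\beta\lambda))\,D_j^{(s)}(\tau),
\]
which is precisely modularity of weight $k-j$ on $\Gamma_{\phi}$ (with the character $\gamma\mapsto e(M(\alpha\mu-\beta\lambda))$).

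To pin down the almost holomorphic structure I would isolate the one non-holomorphic ingredient of $F^{(s)}$. Expanding $e^{M\pi\epsilon^2/v}=\sum_{r\geq0}\tfrac{1}{r!}(M\pi/v)^r\epsilon^{2r}$ and multiplying by the genuinely holomorphic factor $e(M\alpha\beta+2M\alpha\epsilon)q^{M\alpha^2}$ writes $F^{(s)}(\epsilon;\tau)=\sum_{p\geq0}a_p(\tau)(2\pi i\epsilon)^p$, where each $a_p$ is a polynomial in $1/v$ of degree $\lfloor p/2\rfloor$ with holomorphic coefficients in $\tau$. Collecting the $(2\pi i\epsilon)^{-j}$ term of the product with the Laurent expansion of $\phi(\epsilon+z_s;\tau)$ gives the finite sum $D_j^{(s)}=\sum_{\lambda=j}^{n_s}a_{\lambda-j}\,\widetilde{D}_\lambda^{(s)}$. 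Each $\widetilde{D}_\lambda^{(s)}(\tau)$ is holomorphic in $\tau$ (it is a small-circle contour integral of $\phi$, which is holomorphic in $\tau$ off its poles) and, by the weak holomorphicity of $\phi$, only weakly holomorphic at the cusps; hence $D_j^{(s)}$ is a polynomial in $1/v$ with weakly holomorphic coefficients, i.e.\ an almost holomorphic modular form. Its holomorphic part is the $1/v$-free term, which selects exactly the $r=0$ contribution $a_{\lambda-j}|_{1/v\text{-free}}=e(M\alpha\beta)q^{M\alpha^2}\tfrac{(2M\alpha)^{\lambda-j}}{(\lambda-j)!}$, so that the holomorphic part equals $e(M\alpha\beta)q^{M\alpha^2}\sum_{\lambda=j}^{n_s}\tfrac{(2M\alpha)^{\lambda-j}}{(\lambda-j)!}\widetilde{D}_\lambda^{(s)}$ and is visibly determined by the $\widetilde{D}_\lambda^{(s)}$.

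The algebraic heart — the twofold exact cancellation of the quadratic exponentials — is straightforward once the substitution $\tfrac{\epsilon+z_{s\gamma}}{c\tau+d}$ is set up correctly. I expect the genuinely delicate point to be the cusp analysis: confirming that the holomorphic coefficients of the $1/v$-polynomial are \emph{weakly} holomorphic (meromorphic at the cusps with only a finite principal part) rather than having worse growth. This is exactly where the weak holomorphicity of $\phi$ in $\tau$ must be invoked and transferred to the Laurent coefficients $\widetilde{D}_\lambda^{(s)}$ via their contour-integral description, so I would spend the most care there.
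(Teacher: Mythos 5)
Your proposal reproduces the paper's own proof essentially step for step: the paper likewise transforms the product $F^{(s)}(\epsilon;\tau)\phi(\epsilon+z_s;\tau)$ by combining Lemma \ref{lem:1}(1) with the modular law of $\phi$ (with exactly the cancellation of the two quadratic exponentials you describe), then shifts $s\gamma$ back to $s$ by an integer vector using Lemma \ref{lem:1}(2) and the elliptic law \eqref{eq:5}, compares coefficients of $(2\pi i\epsilon)^{-j}$ to obtain the weight $k-j$ transformation, and finally expands $F^{(s)}$ in $\epsilon$ to exhibit $D_j^{(s)}$ as a finite combination of the $\widetilde{D}_{\lambda}^{(s)}$ with coefficients that are polynomials in $\frac{1}{v}$. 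Your bookkeeping $D_j^{(s)}=\sum_{\lambda=j}^{n_s}a_{\lambda-j}\widetilde{D}_{\lambda}^{(s)}$ and the resulting holomorphic part $e(M\alpha\beta)q^{M\alpha^2}\sum_{\lambda=j}^{n_s}\frac{(2M\alpha)^{\lambda-j}}{(\lambda-j)!}\widetilde{D}_{\lambda}^{(s)}$ are correct; the paper's displayed versions of these formulas contain index typos, but the content is the same. Your closing concern about weak holomorphicity at the cusps is legitimate and handled exactly as you suggest (the paper passes over it silently).

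The one point where you stop short of the proposition is the factor $e(M(\alpha\mu-\beta\lambda))$. You carry it along as a character, but the proposition asserts that $D_j^{(s)}$ is modular of weight $k-j$ on $\Gamma_{\phi}$ with no character, and the paper closes this by remarking that for $\gamma\in\Gamma_{\phi}$ this coefficient is in fact equal to $1$. This is precisely where the third defining condition of $\Gamma_{\alpha,\beta}$, namely $m\left(c\beta^2-b\alpha^2+(d-a)\alpha\beta\right)\in\Z$, enters the argument (the first two conditions only guarantee that $(\lambda,\mu)=s\gamma-s$ lies in $\Z^2$); it is the condition from Theorem 1.3 of Eichler--Zagier that makes $q^{m\alpha^2}\phi(\alpha\tau+\beta;\tau)$ modular without multiplier, and your proof never invokes it. As written, your transformation law with character is a strictly weaker conclusion than the statement being proved, so you should add the verification that this exponential is trivial on $\Gamma_{\phi}$; with that one observation supplied, your argument coincides with the paper's.
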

\begin{proof}
Firstly, we prove the modularity.  
From the definition of  Jacobi forms and from Lemma \ref{lem:1} it follows that for each $\gamma \in \Gamma_{\phi}$ 
\[
F^{(s)}\rl{\frac{\epsilon}{c\tau+d};\gamma\tau}\phi\rl{\frac{\epsilon}{c\tau+d}+z_s(\gamma\tau);\gamma\tau} =
(c\tau+d)^k F^{(s\gamma)}(\epsilon;\tau)\phi(\epsilon+z_{s\gamma};\tau) .
\]
Using the elliptic transformation properties of both $F^{(s)}$ and $\phi$, we shift $s\gamma$ to  $s$ by virtue of our discussion in Subsection \ref{subsec:3}, say
\[
s=s\gamma +(\lambda,\mu),
\]
for some $(\lambda,\mu)\in \Z^2$, obtaining
\[
(c\tau+d)^k F^{(s\gamma)}(\epsilon;\tau)\phi(\epsilon+z_{s\gamma};\tau) = e(M(\alpha\mu-\beta\lambda))(c\tau+d)^k F^{(r)}(\epsilon;\tau)\phi(\epsilon+z_{r};\tau).
\]
Notice that since $\gamma \in \Gamma_\phi$, the coefficient $e(M(\alpha\mu-\beta\lambda))$ is in fact equal to $1$.
In particular, writing both the right and the left hand sides in terms of the Laurent expansion, we obtain
\[
D_j^{(s)}(\gamma\tau)=(c\tau+d)^{k-j}D_j^{(s)}(\tau),
\]
which proves the modular property. 
It remains to prove that they can be written as polynomials in $\frac{1}{v}$ with weakly holomorphic coefficients. 
Clearly, each of the $D_j^{(s)}(\tau)$ can be written as combinations of the Laurent coefficients of $\phi(\epsilon +z_s;\tau)$ and $F^{(s)}(\epsilon;\tau)$ in $\epsilon=0$. More precisely, it is easy to see that 
\[
D_j^{(s)}(\tau)=\sum_{\lambda=0}^{n_s-j}\frac{1}{(2\pi i)^{\lambda}\lambda!}\widetilde{D}_{\lambda}^{(s)}(\tau)
\frac{\partial^{j-\lambda}}{\partial \epsilon^{j-\lambda}}\ql{F^{(s)}(\epsilon;\tau)}_{\epsilon=0}.
\]
It is straightforward to show that $\frac{\partial^n}{\partial \epsilon^n}\ql{F^{(s)}(\epsilon;\tau)}_{\epsilon=0}$ is equal to $q^{\alpha ^2M}$ times a polynomial in $\frac{1}{v}$ with coefficients in $\mathbb{C}$. Furthermore, its constant term is given by $(4\pi i M\alpha)^{n}$. From these observations it follows that $D_j^{(s)}(\tau)$ is an almost holomorphic modular form, whose holomorphic part is given by
\[
q^{\alpha^2 M}e(M\alpha\beta)\sum_{\lambda=0}^{n_s-j}\widetilde{D}_{\lambda}^{(s)}(\tau)(4\pi i M\alpha)^{j-\lambda}.
\]
\end{proof}

The second and most important property of $F^{(s)}$ states that if a smooth function $g(\epsilon;\tau)$ is annihilated by the level $M$ heat operator
\[
H_M:=8\pi iM\frac{\partial}{\partial \tau}+\frac{\partial^2}{\partial \epsilon^2}
\]
for some integer $M$, then the image of $g$ under the raising operator is related to the image of the ratio of $g$ with $F^{(0,0)}$ under the operator $\frac{\partial}{\partial \epsilon}$. The precise version of this statement is given below. Here and throughout $\delta_{\epsilon}$ denotes the differential $\frac{1}{2\pi i}\frac{\partial}{\partial \epsilon}$.

\begin{proposition} \label{pro:7}
Let $g(\epsilon;\tau)$ be a smooth function in both the variables such that for some $M\in \Z$
\begin{equation} \label{eq:27}
H_M[g(\epsilon;\tau)]=0,
\end{equation}
then for all $j\geq 0$ one has
\begin{equation} \label{eq:25}
\delta_{\epsilon}^{2j}\left[ \frac{g(\epsilon;\tau)}{F^{(0,0)}(\epsilon;\tau)} \right]_{\epsilon=0}
=\rl{\frac{M}{\pi}}^j R_{\frac{1}{2}}^j\rl{g(0;\tau)},
\end{equation}
and 
\begin{equation} \label{eq:16}
\delta_{\epsilon}^{2j+1}\left[ \frac{g(\epsilon;\tau)}{F^{(0,0)}(\epsilon;\tau)} \right]_{\epsilon=0}
=\rl{\frac{M}{\pi}}^j R_{\frac{3}{2}}^j\left(\delta_{\epsilon}\left[ g(\epsilon;\tau)\right]_{\epsilon=0} \right).
\end{equation}
\end{proposition}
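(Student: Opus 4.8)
The plan is to reduce both identities to a single statement about an $\epsilon$-differential operator evaluated at $\epsilon=0$, and then to match the resulting explicit expressions with the iterated raising operators by induction on $j$. Write $G(\epsilon;\tau):=1/F^{(0,0)}(\epsilon;\tau)=e^{-M\pi\epsilon^2/v}$. Since $\frac{\partial}{\partial\epsilon}G=-\frac{2\pi M\epsilon}{v}G$, a one-line induction gives the conjugation identity
\[
\frac{\partial^n}{\partial\epsilon^n}(gG)=\left(\mathcal D^n g\right)G,\qquad \mathcal D:=\frac{\partial}{\partial\epsilon}-\frac{2\pi M\epsilon}{v}.
\]
Because $G(0;\tau)=1$, evaluating at $\epsilon=0$ and dividing by $(2\pi i)^n$ turns the left-hand sides of \eqref{eq:25} and \eqref{eq:16} into
\[
\delta_\epsilon^n\!\left[\frac{g}{F^{(0,0)}}\right]_{\epsilon=0}=\frac{1}{(2\pi i)^n}\left.\mathcal D^n g\right|_{\epsilon=0}.
\]
Thus it suffices to understand $\mathcal D^n g$ at $\epsilon=0$.

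First I would make this completely explicit. By Leibniz,
\[
\left.\frac{\partial^n}{\partial\epsilon^n}(gG)\right|_{\epsilon=0}=\sum_{k=0}^{n}\binom{n}{k}\left.\frac{\partial^k g}{\partial\epsilon^k}\right|_{\epsilon=0}\left.\frac{\partial^{n-k}G}{\partial\epsilon^{n-k}}\right|_{\epsilon=0},
\]
and the Gaussian moments $\frac{\partial^{2l}}{\partial\epsilon^{2l}}G|_{\epsilon=0}=\left(-\frac{2\pi M}{v}\right)^{l}(2l-1)!!$ (with the odd ones vanishing) kill every term with $k\not\equiv n\pmod 2$. Now the hypothesis $H_M[g]=0$, i.e. $\frac{\partial^2 g}{\partial\epsilon^2}=-8\pi iM\frac{\partial g}{\partial\tau}$, lets me trade the surviving $\epsilon$-derivatives for $\tau$-derivatives:
\[
\left.\frac{\partial^{2m}g}{\partial\epsilon^{2m}}\right|_{\epsilon=0}=(-8\pi iM)^m\frac{\partial^m a}{\partial\tau^m},\qquad \left.\frac{\partial^{2m+1}g}{\partial\epsilon^{2m+1}}\right|_{\epsilon=0}=(-8\pi iM)^m\frac{\partial^m b}{\partial\tau^m},
\]
where $a(\tau):=g(0;\tau)$ and $b(\tau):=\frac{\partial}{\partial\epsilon}g(\epsilon;\tau)|_{\epsilon=0}$. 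The key structural point is that $k\equiv n\pmod2$ forces the even case $n=2j$ to involve only $\tau$-derivatives of $a$ and the odd case $n=2j+1$ only $\tau$-derivatives of $b$ --- precisely the split between $R_{1/2}^{j}(g(0;\tau))$ and $R_{3/2}^{j}(\delta_\epsilon[g]_{\epsilon=0})$ in the statement. Collecting terms, the even case reads
\[
A_j(\tau):=\delta_\epsilon^{2j}\!\left[\frac{g}{F^{(0,0)}}\right]_{\epsilon=0}=\frac{1}{(2\pi i)^{2j}}\sum_{m=0}^{j}\binom{2j}{2m}\left(-\frac{2\pi M}{v}\right)^{j-m}(2(j-m)-1)!!\,(-8\pi iM)^{m}\frac{\partial^m a}{\partial\tau^m},
\]
with the analogous sum (binomial $\binom{2j+1}{2m+1}$, and $a$ replaced by $b$) in the odd case.

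The final step is to recognise these sums as iterated raising operators. I would prove the recursion
\[
A_{j+1}=\frac{M}{\pi}\,R_{\frac12+2j}\!\left(A_j\right),\qquad R_{\frac12+2j}=2i\frac{\partial}{\partial\tau}+\frac{\frac12+2j}{v},
\]
by induction on $j$, comparing the coefficient of each $\frac{\partial^m a}{\partial\tau^m}$ on both sides; the base case is $A_0=a$ (since $G(0;\tau)=1$), and for the odd family the base case is $A_0^{\mathrm{odd}}=\delta_\epsilon[g]_{\epsilon=0}$ because $\frac{\partial}{\partial\epsilon}G$ vanishes at $\epsilon=0$. Combined with $R_{1/2}^{j+1}=R_{1/2+2j}\circ R_{1/2}^{j}$, the recursion unwinds to $A_j=(M/\pi)^{j}R_{1/2}^{j}(a)$, which is \eqref{eq:25}; the same argument with weights $\tfrac32+2j$ yields \eqref{eq:16}.

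The main obstacle is exactly this last coefficient-matching. The delicate point is that the weights in $R_{1/2}^{j}$ grow as $\tfrac12,\tfrac52,\tfrac92,\dots$, while a single $\delta_\epsilon^2$ only ever sees the fixed Gaussian $F^{(0,0)}$; the increasing weight is therefore produced combinatorially, by $\frac{\partial}{\partial\tau}$ acting on the powers $v^{-(j-m)}$ inside $A_j$ together with the double factorials $(2(j-m)-1)!!$ and the binomials $\binom{2j}{2m}$. Checking that these conspire to reproduce the precise factor $\tfrac12+2j$ (respectively $\tfrac32+2j$) is where the real work lies; I expect it to reduce to Pascal's rule together with $(2l-1)!!=(2l-1)(2l-3)!!$, but the interlocking bookkeeping of the $v$-powers and the numerical constants is the part that must be carried out carefully.
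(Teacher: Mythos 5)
Your proposal is correct and follows essentially the same route as the paper's own (sketched) proof: expand $\delta_\epsilon^{2j}$ or $\delta_\epsilon^{2j+1}$ of $g/F^{(0,0)}$ at $\epsilon=0$ by Leibniz, use the heat equation to convert even $\epsilon$-derivatives of $g$ into $\tau$-derivatives of $g(0;\tau)$ (resp.\ of $\delta_\epsilon[g]_{\epsilon=0}$), evaluate the Gaussian factor's derivatives explicitly, and then identify the resulting finite sum with $\left(\frac{M}{\pi}\right)^j R_{\frac12}^j$ (resp.\ $R_{\frac32}^j$) by the recursion $A_{j+1}=\frac{M}{\pi}R_{\frac12+2j}(A_j)$ proved inductively via coefficient comparison. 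Your explicit coefficient formula agrees with the paper's $\alpha_{k,j}=\frac{(2j)!}{(2k)!(j-k)!}i^k2^{3k-2j}$, and the coefficient-matching you defer does indeed close (the factor works out to $(2j+1)(2j+2)$), exactly as the paper asserts in its induction step.
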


\begin{proof}
We only give a sketch of the proof since it is rather technical, and it follows essentially the computation used in Theorem 3.5 in \cite{BF2}. There the authors proved \eqref{eq:16} in the case of 
$g(u;\tau)=R_{M,\ell}(u;\tau)$. Notice that the operator $\mathbb{D}_{\epsilon}$ used in \cite{BF2} is equivalent to the left hand side of \eqref{eq:16}. We point out that in their computation the only property that they need is that $R_{M,\ell}$ is annihilated by the heat operator.

We prove \eqref{eq:25}. Similarly one can prove \eqref{eq:16}.
From \eqref{eq:27} we know that
\[
\delta_{\epsilon}^{2k}\ql{g(\epsilon;\tau)}_{\epsilon=0}=\rl{\frac{2iM}{\pi}}^k \frac{\partial^{k}}{\partial \tau^{k}}\ql{g(0;\tau)}.
\]
It follows that the left hand side of \eqref{eq:25} equals
\begin{multline} \label{eq:26}
\delta_\epsilon ^{2j}  \ql{e^{-\frac{M\epsilon^2}{v}}g\rl{\epsilon;\tau}}_{\epsilon=0}=
\sum_{k=0}^{j}\binom {2j}{2k}\delta_\epsilon ^{2k}\ql{g\rl{\epsilon;\tau}}_{\epsilon=0}\delta_\epsilon ^{2j-2k}\ql{e^{-\frac{ M \epsilon ^2}{v}}}_{\epsilon=0}\\
=\rl{  \frac{M}{\pi} }^j\sum_{k=0}^j \frac{(2j)!}{(2k)!(j-k)!}i^k 2^{3k-2j}v^{k-j} \frac{\partial^k}{\partial\tau^k}\ql{g\rl{0;\tau}}.
\end{multline}
If we set
\[
\alpha _{k,j}:= \frac{(2j)!}{(2k)!(j-k)!}i^k 2^{3k-2j}
\]
and
\[
g_j (\tau):=\sum_{k=0}^j \alpha_{k,j}v^{k-j} \frac{\partial^k}{\partial\tau^k}\ql{g\rl{0;\tau}},
\]
then \eqref{eq:26} can be written as
\[
\delta_\epsilon ^{2j}  \ql{e^{-\frac{M\epsilon^2}{v}}g\rl{\epsilon;\tau}}_{\epsilon=0}=\left(  \frac{M}{\pi} \right)^j g_j (\tau).
\]
A straightforward calculation gives
\[
g_1 (\tau)=R_{\frac{1}{2}}\rl{g(0;\tau)},
\]
and then by induction one can show that
\[
g_{j+1}(\tau)=R_{2j+\frac{1}{2}}\rl{g_j (\tau)},
\]
which proves the proposition.
\end{proof}

\section{Decompositions of a meromorphic Jacobi form}
In this section we recall the canonical decomposition of a meromorphic Jacobi form as made by Dabholkar, Murthy, and Zagier \cite{DMZ}. In their work the modular property of each summand is proved for poles of order at most 2. In the last part of this section we provide a second decomposition into two non-holomorphic Jacobi forms, that allows us to understand the modularity in the general case.

\subsection{The canonical splitting}
In \cite{DMZ}, the authors faced and solved the problem of defining a theta decomposition of meromorphic Jacobi forms. The basic idea was to construct an analogue of the Fourier coefficients $h_{\ell}$ preserving the structure and the fundamental properties. In order to do so, they defined the \textit{canonical Fourier coefficients} of the meromorphic Jacobi form $\phi$ of weight $k$ and index $M$ as
\begin{equation} \label{eq:23}
h_{\ell}(\tau):=q^{-\frac{\ell ^2}{4M}}\int\limits_{-\frac{\ell \tau}{2M}}^{-\frac{\ell \tau}{2M}+1}\phi(z;\tau)\zeta^{-\ell}dz.
\end{equation}
\begin{remark}
In the integral above the path of integration is the straight line if there are no poles on it. If $\phi$ has poles on this line, then $h_{\ell}$ is not well defined. For details about this case refer to \cite{DMZ}.
\end{remark}
The key of this definition is that the canonical Fourier coefficients depend just on $\ell$ modulo $2M$, i.e.,
\[
h_{\ell +2M}(\tau)=h_{\ell}(\tau).
\]
 Note that they are essentially the local Fourier coefficients of $\phi$ in a certain region. This implies the possibility of defining the \textit{finite part} of $\phi$ as a theta expansion, namely
\[
\phi^F(z;\tau):=\sum_{\ell \pmod{2M}} h_{\ell}(\tau)\theta_{M,\ell}(z;\tau).
\]
Notice that if $\phi$ is holomorphic then $\phi^F$ coincides with $\phi$. The main difference in the meromorphic case is that the coefficients $h_{\ell}$ are no longer modular, as well as $\phi^F$. On the other hand the \textit{polar part} $\phi^P:=\phi-\phi^F$ has a nice structure that allows us to understand how far are the canonical Fourier coefficients from being modular. Equivalently we are able to complete $h_{\ell}$ in order to obtain a modular object, in a canonical way.

In the following proposition we describe the structure of $\phi^P$. The result and its proof are analogous to the ones in Theorem 3.3 in \cite{BF2}. In order to state it, we define the Laurent expansion of $\phi$ at each of its poles $z_s$ by
\begin{equation} \label{eq:13}
\phi\rl{\epsilon +z_s ;\tau}=\sum_{j=1}^{n_s} \frac{\widetilde{D}_j ^{(s)}(\tau)}{\rl{2\pi i \epsilon}^j} + O(1).
\end{equation}

While computing $\phi^P$, the mock Jacobi form in \eqref{eq:17} and a theta-like decomposition with coefficients
\begin{equation} \label{eq:12}
\xi^{(\alpha,\beta)}_{M,\ell}(u;\tau):=\sum_{\substack{  r\in \Z  \\ r\equiv \ell \pmod{2M} }}\frac{\sgn\rl{r+\frac{1}{2}}-\sgn\rl{r+2M\alpha}}{2}
q^{-\frac{r^2}{4M}}e(-ru)
\end{equation}
naturally appear.
\begin{proposition} \label{pro:2}
For each pole $z_s \in S_0$ let $\widetilde{D}_j^{(s)}$ as in \eqref{eq:13}, $f^{(M)}$ as in \eqref{eq:17} and $\xi^{(s)}_{M,\ell}$ as in \eqref{eq:12}. Then the polar part of $\phi$ can be written as
\[
\phi^{P}(z;\tau)= -\sum_{z_s \in S_0^{(\tau)}} \sum_{j=1}^{n_s} \frac{\widetilde{D}_j ^{(s)}(\tau)}{(j-1)!} 
\delta_\epsilon^{j-1}
\ql{ f^{(M)}_{z_s+\epsilon}(z;\tau)-\sum_{\ell  \pmod{2M}}\xi^{(s)}_{M,\ell}(\epsilon+z_s;\tau)\theta_{M,\ell}(z;\tau)}_{\epsilon=0}.
\]
\end{proposition}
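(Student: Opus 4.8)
The plan is to set $\Psi(z;\tau)$ equal to the right--hand side of the asserted formula and to prove the equivalent identity $\phi-\Psi=\phi^F=\sum_{\ell\pmod{2M}}h_\ell\theta_{M,\ell}$. I would establish this by checking three things about $\phi-\Psi$: that it is holomorphic in $z$, that it satisfies the index--$M$ elliptic transformation \eqref{eq:5}, and that its canonical Fourier coefficients are exactly the $h_\ell$ of \eqref{eq:23}. Once these hold, the standard fact (recalled around \eqref{eq:7}) that a $z$--holomorphic function satisfying \eqref{eq:5} admits a theta decomposition, together with the coefficient identification, forces $\phi-\Psi=\phi^F$, i.e. $\Psi=\phi^P$.

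First I would match principal parts. The series $f^{(M)}_{w}$ has a simple pole at $z=w$ coming from the $n=0$ summand $1/(1-e(z-w))$, namely $f^{(M)}_{w}(z;\tau)=-1/(2\pi i(z-w))+O(1)$, while the theta--like term $\sum_{\ell}\xi^{(s)}_{M,\ell}(\epsilon+z_s;\tau)\theta_{M,\ell}(z;\tau)$ is holomorphic in $z$ and so contributes nothing to the principal parts. Applying $\delta_\epsilon^{j-1}$ and setting $\epsilon=0$ converts this simple pole into a pole of order $j$, and a direct comparison shows that $-\sum_{j}\frac{\widetilde D_j^{(s)}}{(j-1)!}\delta_\epsilon^{j-1}\bigl[f^{(M)}_{z_s+\epsilon}\bigr]_{\epsilon=0}$ has exactly the principal part $\sum_j \widetilde D_j^{(s)}/(2\pi i\epsilon)^j$ prescribed by \eqref{eq:13}. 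Since the poles of $f^{(M)}_{z_s+\epsilon}$ sit precisely at the $\Z\tau+\Z$--translates of $z_s$, it is enough to run this over $z_s\in S_0^{(\tau)}$, after which $\phi-\Psi$ extends holomorphically in $z$.

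Next I would verify the elliptic law. Both $f^{(M)}_{w}(z;\tau)$ and $\theta_{M,\ell}(z;\tau)$ transform under $z\mapsto z+\lambda\tau+\mu$ with the index--$M$ automorphy factor (this is the $z$--ellipticity of \eqref{eq:17}, matching Proposition \ref{pro:6}(2), and the usual theta transformation), whereas $\xi^{(s)}_{M,\ell}(\epsilon+z_s;\tau)$ is independent of $z$; hence $\Psi$, and therefore $\phi-\Psi$, satisfies \eqref{eq:5}. Being $z$--holomorphic and elliptic of index $M$, the function $\phi-\Psi$ has a theta decomposition $\sum_\ell g_\ell\theta_{M,\ell}$, and for such a function $g_\ell$ agrees with its canonical Fourier coefficient, since the contour integral \eqref{eq:23} is path--independent in the holomorphic case. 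By linearity $g_\ell=h_\ell-\bigl(\text{canonical Fourier coefficient of }\Psi\bigr)$, so the whole statement reduces to showing that the canonical Fourier coefficients of $\Psi$ vanish.

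This last reduction is the heart of the matter and the main obstacle. Extracting the coefficient from the theta--like term is immediate, because $q^{-\ell^2/4M}\int_{C_\ell}\theta_{M,\ell'}(z;\tau)\zeta^{-\ell}\,dz$ is $1$ when $\ell'\equiv\ell\pmod{2M}$ and $0$ otherwise, where $C_\ell$ is the path of \eqref{eq:23} from $-\ell\tau/(2M)$ to $-\ell\tau/(2M)+1$. Thus everything comes down to proving that the canonical $\ell$--th Fourier coefficient of $f^{(M)}_{z_s+\epsilon}$ equals $\xi^{(s)}_{M,\ell}(\epsilon+z_s;\tau)$, so that the two contributions cancel. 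Here I would expand each summand $1/(1-e(z-w)q^n)$ as a geometric series, the direction of expansion being dictated by whether $|e(z-w)q^n|$ is less than or greater than $1$ along $C_\ell$, whose height is $-\ell v/(2M)$; the threshold falls near $n\approx \ell/(2M)+\alpha$. Interchanging summation and integration against $\zeta^{-\ell}$ and collecting the surviving $q$-- and $\zeta$--powers, the region--dependence produces precisely the difference of signs $\tfrac12\bigl(\sgn(r+\tfrac12)-\sgn(r+2M\alpha)\bigr)$ of \eqref{eq:12}, in which $\sgn(r+\tfrac12)$ encodes the reference expansion and $\sgn(r+2M\alpha)$ the shift caused by the pole sitting at height $\alpha v$. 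The delicate step is tracking the convergence regions, the signs, and the exact powers through this interchange; once this is done the canonical Fourier coefficient of $\Psi$ vanishes, giving $g_\ell=h_\ell$ and hence $\Psi=\phi^P$, as claimed.
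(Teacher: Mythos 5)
Your proposal is correct, but it takes a genuinely different route from the paper. The paper \emph{derives} the formula rather than verifying it: it writes $\phi^P=\sum_{\ell\in\Z}\bigl(h_\ell^{(\widetilde z)}(\tau)-h_\ell(\tau)\bigr)q^{\ell^2/4M}\zeta^{\ell}$, where $h_\ell^{(\widetilde z)}$ is the local Fourier coefficient on a contour at the height of $z$, applies the residue theorem to the parallelogram bounded by that contour and the canonical one at height $-\ell v/(2M)$, expresses each residue through the Laurent coefficients $\widetilde D_j^{(s)}$ via the elliptic law, and then resums over $\ell$ and over lattice translates of the poles until the kernel $f^{(M)}_{z_s+\epsilon}-\sum_{\ell}\xi^{(s)}_{M,\ell}\theta_{M,\ell}$ emerges. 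You instead take the right-hand side $\Psi$ as given and characterize $\phi^P$ by three checks: principal-part matching (which does work out, since $\delta_\epsilon^{j-1}$ applied to the simple pole $-1/(2\pi i(z-z_s-\epsilon))$ of the $n=0$ summand yields $-(j-1)!/(2\pi i)^j(z-z_s)^j$ at $\epsilon=0$, so the prefactor $-\widetilde D_j^{(s)}/(j-1)!$ reproduces the expansion \eqref{eq:13}), ellipticity of $\Psi$, and the self-contained identity that the canonical $\ell$-th Fourier coefficient of $f^{(M)}_{w}$, with $w$ at height $\alpha v$, equals $\xi^{(s)}_{M,\ell}(w;\tau)$; I verified this last identity: expanding each $n$-summand geometrically according to whether $n>\alpha+\ell/(2M)$ and extracting the coefficient of $\zeta^{\ell}$ leaves exactly the terms $r\equiv\ell\pmod{2M}$ weighted by $+1$ when $r\geq 0>r+2M\alpha$ and $-1$ when $r<0<r+2M\alpha$, which is $\tfrac12\bigl(\sgn(r+\tfrac12)-\sgn(r+2M\alpha)\bigr)$. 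The two computations are dual -- the paper resums residues of $\phi$ into the kernel, you expand the kernel into Fourier modes -- and both need the same wall bookkeeping (no poles on the contours, generic small $\epsilon$). What your route buys: a cleaner logical skeleton (entire $+$ elliptic $+$ coefficient matching forces $\phi-\Psi=\phi^F$), and a reusable fact about $f^{(M)}_w$ alone, independent of $\phi$, much in the spirit of how \cite{DMZ} treat poles of low order. What the paper's route buys: it is constructive, discovering the kernel rather than guessing it, and it parallels Theorem 3.3 of \cite{BF2} so closely that the technical details can be delegated there.
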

\begin{proof}
Here we give a sketch of the proof. For details see the analogous proof of Theorem 3.3 in \cite{BF2}.
Let $\widetilde{z} :=A\tau+B \in \mathbb{C}$ be fixed, where $A$, $B\in \Q$. Since both $\phi$ and $\phi^F$ are meromorphic then it is not a restriction to assume $\im(z)=\im(\widetilde{z})=Av$. Therefore, defining $h_\ell^{(\widetilde{z})}$ by
\[
h_{\ell}^{(\widetilde{z})}(\tau):=q^{-\frac{\ell^2}{4M}}\int_{\widetilde{z}}^{\widetilde{z}+1}\phi\rl{z;\tau}\zeta^{-\ell}dz,
\]
we can write
\begin{equation} \label{eq:28}
\phi^P\rl{z;\tau}=\phi\rl{z;\tau}-\phi^F\rl{z;\tau}=\sum_{\ell \in \Z}\rl{h_{\ell}^{(\widetilde{z})}(\tau)-h_{\ell}(\tau)}q^{\frac{\ell^2}{4M}}\zeta^{\ell}.
\end{equation}
Let $P_{\widetilde{z}}$ be the parallelogram of vertices $\{ \widetilde{z}, \ \widetilde{z}+1, \ -\frac{\ell \tau}{2M}, \ -\frac{\ell \tau}{2M}+1  \}$, and 
\[
S_{(\tau, \widetilde{z})}:=P_{\widetilde{z}} \cap S^{(\tau)},
\]
where as before $S^{(\tau)}$ is the set of poles of $z\mapsto \phi\rl{z;\tau}$. Applying the residue theorem to \eqref{eq:28} we obtain
\begin{equation} \label{eq:29}
\phi^{P}\rl{z;\tau}=2\pi i \sum_{\ell \in \Z}\sum_{z_s \in S_{(\tau, \widetilde{z})}}\Res_{z=z_s}\left[  \phi\rl{z;\tau}\zeta^{-\ell} \right]\zeta^{\ell}.
\end{equation}
Since any pole in $S_{(\tau, \widetilde{z})}$ is equivalent to a pole in $S^{(\tau)}_0$ modulo $\Z \tau +\Z$, then writing the residue as
\[
\Res_{\epsilon=0}\left[  \phi\rl{z_s+\epsilon;\tau}\e\rl{-\ell(\epsilon+z_s)} \right]=\sum_{j=1}^{n_s}\frac{\widetilde{D}_j^{(s)}(\tau)}{2\pi i (j-1)!}\delta_{\epsilon}^{j-1}\left[  \e\rl{-\ell(\epsilon+z_s)} \right]_{\epsilon=0}, 
\]
and using the elliptic transformation law of $\phi$, we obtain
\begin{multline*} 
\phi^P (z;\tau)= -\sum_{z_s \in S_0^{(\tau)}} \sum_{j=1}^{n_s} \frac{\widetilde{D}_j ^{(s)}(\tau)}{(j-1)!} 
\delta_\epsilon^{j-1}
\ql{ \sum_{\lambda \in \Z}      q^{M\lambda ^2}   e\rl{2M\lambda z}   \right. \\
\left. \times \sum_{\ell \in \Z}          
\frac{\sgn\rl{-\alpha  +\lambda+A}+\sgn\rl{\ell +2M\alpha  }}{2} 
q^{\ell \lambda} e\rl{\ell \rl{z-\epsilon -z_s}}}_{\epsilon=0},
\end{multline*}
where $z_s=(\alpha,\beta)$. To conclude the proof it is enough to notice that the argument of the differential operator is in fact 
\[
f^{(M)}_{z_s+\epsilon}(z;\tau)-\sum_{\ell  \pmod{2M}}\xi^{(s)}_{M,\ell}(\epsilon+z_s;\tau)\theta_{M,\ell}(z;\tau).
\]
\end{proof}

\subsection{The completed splitting}
The splitting showed in the previous subsection does not differ from the one in \cite{DMZ}. So far the only difference is the way we express the polar part. Using this decomposition as a starting point, we provide a second decomposition adding and subtracting a non holomorphic term to $\phi^F$ and $\phi^P$ respectively, in order to obtain the two modular completions $\widehat{\phi}^F$ and $\widehat{\phi}^P$.

Analysing the representation of $\phi^P$ in Proposition \ref{pro:2} it is natural to define its completion by
\begin{equation} \label{eq:24}
\widehat{\phi}^{P}(z;\tau):= -\sum_{z_s \in S_0^{(\tau)}} \sum_{j=1}^{n_s} \frac{D_j ^{(s)}(\tau)}{(j-1)!} 
\delta_\epsilon^{j-1}
\ql{ \frac{\widehat{f}^{(M)}_{z_s+\epsilon}(z;\tau)}{F^{(s)}(\epsilon;\tau)}}_{\epsilon=0},
\end{equation}
where $D_j^{(s)}(\tau)$ are the almost holomorphic modular forms associated to the Laurent coefficients  $\widetilde{D}_j^{(s)}(\tau)$ of $\phi(\epsilon +z_s;\tau)$, as described in Proposition \ref{pro:3}, and $F^{(s)}$ is the function described in Section $3$.
\begin{proposition} \label{pro:4}
The completion of the polar part \eqref{eq:24} can be written as
\begin{multline*}
\widehat{\phi}^P(z;\tau)=\phi^P(z;\tau)+\sum_{z_s \in S_0^{(\tau)}} \sum_{j=1}^{n_s} \frac{D_j ^{(s)}(\tau)}{(j-1)!} 
\\ \times \sum_{\ell \pmod{2M}}\delta_\epsilon^{j-1}
\ql{\frac{\frac{1}{2}R_{M,\ell}(\epsilon+z_s;\tau)-\xi^{(s)}_{M,\ell}(\epsilon+z_s;\tau)}{F^{(s)}(\epsilon;\tau)}}_{\epsilon=0}\theta_{M,\ell}(z;\tau).
\end{multline*}
\end{proposition}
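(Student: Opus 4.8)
The plan is to start from the definition of the completion $\widehat{\phi}^P$ in \eqref{eq:24} and simply substitute the defining relation \eqref{eq:14} of $\widehat{f}^{(M)}_{z_s+\epsilon}$, namely
\[
\widehat{f}^{(M)}_{z_s+\epsilon}(z;\tau)=f^{(M)}_{z_s+\epsilon}(z;\tau)-\frac{1}{2}\sum_{\ell \pmod{2M}}R_{M,\ell}(z_s+\epsilon;\tau)\theta_{M,\ell}(z;\tau).
\]
Inserting this into \eqref{eq:24} splits $\widehat{\phi}^P$ into two pieces: one built from $f^{(M)}_{z_s+\epsilon}/F^{(s)}$ and one built from $\frac{1}{2}R_{M,\ell}(z_s+\epsilon;\tau)\theta_{M,\ell}/F^{(s)}$. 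The second piece already matches (up to sign) the $R_{M,\ell}$-term in the claimed formula, so the real content is to show that the first piece reproduces $\phi^P(z;\tau)$ together with the $\xi^{(s)}_{M,\ell}$-term.

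To handle the first piece I would invoke Proposition \ref{pro:2}, which expresses $\phi^P$ in terms of the same differential operator applied to
\[
f^{(M)}_{z_s+\epsilon}(z;\tau)-\sum_{\ell \pmod{2M}}\xi^{(s)}_{M,\ell}(\epsilon+z_s;\tau)\theta_{M,\ell}(z;\tau),
\]
but with the \emph{uncompleted} Laurent coefficients $\widetilde{D}_j^{(s)}$ rather than the almost holomorphic modular forms $D_j^{(s)}$ and with no division by $F^{(s)}$. The key reconciliation step is therefore the observation, coming directly from \eqref{eq:22}, that the two bookkeeping conventions agree: the Laurent coefficients $D_j^{(s)}$ of $F^{(s)}\phi$ are designed precisely so that extracting the $\delta_\epsilon^{j-1}$-part of $f^{(M)}_{z_s+\epsilon}/F^{(s)}$ against $D_j^{(s)}$ returns the same quantity as extracting it from $f^{(M)}_{z_s+\epsilon}$ against $\widetilde{D}_j^{(s)}$. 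Concretely, I would use the Leibniz rule to expand $\delta_\epsilon^{j-1}[f^{(M)}_{z_s+\epsilon}/F^{(s)}]_{\epsilon=0}$ and, after summing against $D_j^{(s)}/(j-1)!$, recognize the convolution of the Taylor coefficients of $1/F^{(s)}$ with those of $F^{(s)}$ as the identity; this collapses the $F^{(s)}$-factors and leaves exactly the expression in Proposition \ref{pro:2} evaluated with $\widetilde{D}_j^{(s)}$.

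The main obstacle I expect is this last combinatorial cancellation: keeping track of how the $\delta_\epsilon$-derivatives distribute across the quotient by $F^{(s)}$, and verifying that the almost holomorphic factors $D_j^{(s)}$ recombine with the reciprocal Taylor expansion of $F^{(s)}$ to undo exactly the twist that was introduced in defining $D_j^{(s)}$ from $\widetilde{D}_j^{(s)}$. Once the $f^{(M)}$-piece is shown to equal $\phi^P(z;\tau)$ plus the $\xi^{(s)}_{M,\ell}$-contribution, and the $R_{M,\ell}$-piece is read off directly, assembling the two and correcting the overall sign on the $\xi^{(s)}_{M,\ell}$-term (which becomes $\frac{1}{2}R_{M,\ell}-\xi^{(s)}_{M,\ell}$ inside the bracket) yields the stated formula. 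I would present the derivative bookkeeping schematically rather than expanding every binomial coefficient, since the cancellation is forced once the defining relation \eqref{eq:22} is used.
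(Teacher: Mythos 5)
Your proposal is correct and is essentially the paper's own argument run in the opposite order: the paper first rewrites $\phi^P$ in the $D_j^{(s)}$-with-division-by-$F^{(s)}$ bookkeeping --- by multiplying and dividing the argument of the residue in \eqref{eq:29} by $F^{(s)}(\epsilon;\tau)$, which is exactly the cancellation you propose to verify by Leibniz/convolution, and which is immediate from the residue theorem because $F^{(s)}$ is holomorphic and non-vanishing in $\epsilon$ --- and only then substitutes \eqref{eq:14}. The combinatorial step you flag as the main obstacle therefore requires no bookkeeping at all once phrased as a residue computation; otherwise your splitting and sign conventions match the paper's proof.
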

\begin{proof}
Along the proof of Proposition \ref{pro:2} one can see that it is also possible to write $\phi^P$ in terms of $D_{2j}$ instead of $\widetilde{D}_{2j}$. More precisely, one can divide and multiply the argument of the residue in \eqref{eq:29} by $F^{(s)}(\epsilon;\tau)$. Considering now the Laurent expansion of $F^{(s)}(\epsilon;\tau)\phi(\epsilon+z_s;\tau)$ instead of $\phi(\epsilon+z_s;\tau)$ and proceeding with the same computation as is Proposition $\ref{pro:2}$ one obtains
\[
\phi^{P}(z;\tau)= -\sum_{z_s \in S_0^{(\tau)}} \sum_{j=1}^{n_s} \frac{D_j ^{(s)}(\tau)}{(j-1)!} 
\delta_\epsilon^{j-1}
\ql{\frac{ f^{(M)}_{z_s+\epsilon}(z;\tau)-\sum_{\ell  \pmod{2M}}\xi^{(s)}_{M,\ell}(\epsilon+z_s;\tau)\theta_{M,\ell}(z;\tau)}{F^{(s)}(\epsilon;\tau)}}_{\epsilon=0}.
\]
It is enough to use \eqref{eq:14} to conclude the proof.
\end{proof}

Using Lemma \ref{lem:1} and Proposition \ref{pro:4} we define the completion of the finite part as 
\[
\widehat{\phi}^F(z;\tau):=\sum_{\ell \pmod{2M}} \widehat{h}_{\ell}(\tau)\theta_{M,\ell}(z;\tau),
\]
where the functions $\widehat{h}_{\ell}$ are defined as the \textit{completion of the canonical Fourier coefficients} $h_{\ell}$:
\[
\widehat{h}_{\ell}(\tau):=h_{\ell}(\tau)-\sum_{z_s \in S_0^{(\tau)}} \sum_{j=1}^{n_s} \frac{D_j ^{(s)}(\tau)}{(j-1)!} \delta_\epsilon^{j-1}
\ql{\frac{\frac{1}{2}R_{M,\ell}(\epsilon+z_s;\tau)-\xi^{(s)}_{M,\ell}(\epsilon+z_s;\tau)}{F^{(s)}(\epsilon;\tau)}}_{\epsilon=0}.
\]
\begin{proposition}\label{pro:5}
The functions $\widehat{\phi}^F$ and $\widehat{\phi}^P$ satisfy the same transformation properties as $\phi$. Furthermore $\rl{\widehat{h}_\ell (\tau)}_{\ell \pmod{2M}}$ transforms as a vector-valued modular form of weight $k-\frac{1}{2}$ for $\SL_2(\Z)$.
\end{proposition}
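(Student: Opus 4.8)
The plan is to establish the transformation laws of $\widehat{\phi}^P$ directly from its definition \eqref{eq:24}, and then to obtain the assertions for $\widehat{\phi}^F$ and for the vector $(\widehat{h}_\ell)_\ell$ as consequences. First I would note that, by construction, the correction term subtracted from $\phi^F$ in forming $\widehat{\phi}^F$ is exactly the one added to $\phi^P$ in forming $\widehat{\phi}^P$, so that $\widehat{\phi}^F+\widehat{\phi}^P=\phi^F+\phi^P=\phi$. Hence it suffices to prove that $\widehat{\phi}^P$ satisfies the same transformation laws as $\phi$; then $\widehat{\phi}^F=\phi-\widehat{\phi}^P$ inherits them automatically. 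The elliptic law is immediate: in \eqref{eq:24} the elliptic variable $z$ enters only through $\widehat{f}^{(M)}_{z_s+\epsilon}(z;\tau)$, and by Proposition \ref{pro:6}(2) the factor $e(-M(\lambda^2\tau+2\lambda z))$ produced under $z\mapsto z+\lambda\tau+\mu$ is independent of $\epsilon$, so it passes unchanged through $\delta_\epsilon^{j-1}[\cdot]_{\epsilon=0}$ and through both sums.

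The heart of the proof is the modular transformation. The plan is to replace $z$ by $z/(c\tau+d)$ and $\tau$ by $\gamma\tau$ in \eqref{eq:24} and substitute $\epsilon=\eta/(c\tau+d)$, so that $\delta_\epsilon^{j-1}=(c\tau+d)^{j-1}\delta_\eta^{j-1}$ and $z_s(\gamma\tau)+\epsilon=(z_{s\gamma}(\tau)+\eta)/(c\tau+d)$. Applying Proposition \ref{pro:6}(1) to $\widehat{f}^{(M)}$ in the numerator and Lemma \ref{lem:1}(1) to $F^{(s)}$ in the denominator, the key cancellation occurs: the exponential $e(-Mc(z_{s\gamma}+\eta)^2/(c\tau+d))$ carried by $\widehat{f}^{(M)}$ is exactly the one $F^{(s)}$ contributes in the denominator, so the two cancel and only the index factor $e(Mcz^2/(c\tau+d))$ survives. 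This is precisely the role for which $F^{(s)}$ was designed. Tracking the automorphy factors then yields $(c\tau+d)^{k-j}$ from the (general-$\gamma$) transformation $D_j^{(s)}(\gamma\tau)=(c\tau+d)^{k-j}D_j^{(s\gamma)}(\tau)$ of Proposition \ref{pro:3}, one factor $(c\tau+d)$ from $\widehat{f}^{(M)}$, and $(c\tau+d)^{j-1}$ from the rescaling of $\delta_\epsilon$, which multiply to $(c\tau+d)^k$ as required.

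It then remains to reindex the sum over poles. A general $\gamma\in\SL_2(\Z)$ sends $s\mapsto s\gamma$, which need not lie in $\mathbb{S}_0^{(\tau)}$; writing $s\gamma=\overline{s\gamma}+(\lambda,\mu)$ with $\overline{s\gamma}\in\mathbb{S}_0^{(\tau)}$, I would use the elliptic laws Proposition \ref{pro:6}(2) and Lemma \ref{lem:1}(2) to shift $z_{s\gamma}$ and $F^{(s\gamma)}$ back to $z_{\overline{s\gamma}}$ and $F^{(\overline{s\gamma})}$. The exponentials $e(M(\lambda^2\tau+2\lambda(z_{\overline{s\gamma}}+\eta)))$ so produced cancel between numerator and denominator, and the residual phase from $F^{(s)}$ cancels against the phase carried by $D_j^{(s\gamma)}$, both arising from the same elliptic shift applied to the product $F^{(s)}\phi$ that defines $D_j$. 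Since $s\mapsto\overline{s\gamma}$ is a permutation of $\mathbb{S}_0^{(\tau)}$, summing over all poles is invariant under this reindexing, and the expression reassembles into $(c\tau+d)^k e(Mcz^2/(c\tau+d))\,\widehat{\phi}^P(z;\tau)$ for every $\gamma$. I expect this phase-and-permutation bookkeeping to be the main obstacle, since it is exactly the place where the restriction to $\Gamma_\phi$ in Proposition \ref{pro:3} has to be upgraded to all of $\SL_2(\Z)$ by exploiting that one sums over a complete orbit of poles.

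Finally, once $\widehat{\phi}^P$, and hence $\widehat{\phi}^F=\phi-\widehat{\phi}^P$, is known to transform as a Jacobi form of weight $k$ and index $M$, the statement for the canonical Fourier coefficients follows from the theta decomposition $\widehat{\phi}^F=\sum_{\ell\pmod{2M}}\widehat{h}_\ell\,\theta_{M,\ell}$. The functions $\theta_{M,\ell}(\cdot;\tau)$ are linearly independent and, as recalled in \cite{MR781735}, assemble into a vector-valued modular form of weight $\tfrac12$ for $\SL_2(\Z)$. Comparing the weight-$k$ modular transformation of $\widehat{\phi}^F$ with the known transformation of the $\theta_{M,\ell}$ and matching coefficients in the (unique) theta expansion then forces $(\widehat{h}_\ell)_{\ell\pmod{2M}}$ to transform as a vector-valued modular form of weight $k-\tfrac12$, completing the proof.
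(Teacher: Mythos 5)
Your proposal is correct and follows essentially the same route as the paper's proof: you establish the modular law for $\widehat{\phi}^P$ by combining Proposition \ref{pro:6}(1) with Lemma \ref{lem:1}(1) so that the exponentials in $(z_{s\gamma}+\eta)^2$ cancel, reindex the sum over poles via \eqref{eq:20}, and then read off the statement for $\rl{\widehat{h}_\ell}_{\ell \pmod{2M}}$ from the theta decomposition of $\widehat{\phi}^F$, which transforms as a Jacobi form. Your write-up is somewhat more explicit than the paper's on two points it leaves implicit --- the reduction via $\widehat{\phi}^F=\phi-\widehat{\phi}^P$ and the cancellation of the elliptic-shift phases between $D_j^{(s)}$, $\widehat{f}^{(M)}$, and $F^{(s)}$ when moving $s\gamma$ back into $\mathbb{S}_0^{(\tau)}$ --- but the argument is the same.
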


\begin{proof}
The elliptic transformation property follows from the analogous transformation for
$\widehat{f}^{(M)}_{z_s+\epsilon}(z;\tau)$. In order to show the modularity property, for all $\gamma=\bigl(
\begin{smallmatrix}
    a & b  \\
    c & d
  \end{smallmatrix}
\bigr) \in \SL_2(\Z)$ we now consider

\[
\widehat{\phi}^{P}\rl{\frac{z}{c\tau+d};\gamma\tau}= -\sum_{z_s(\gamma\tau) \in S_0^{(\gamma\tau)}} \sum_{j=1}^{n_s} \frac{D_j ^{(s)}(\gamma\tau)}{(j-1)!} 
\delta_{\frac{\epsilon}{c\tau+d}}^{j-1}
\ql{\frac{\widehat{f}^{(M)}_{\frac{z_{s\gamma}(\tau)+\epsilon}{c\tau+d}}(\frac{z}{c\tau+d};\gamma\tau)}{F^{(s)}\rl{\frac{\epsilon}{c\tau+d};\gamma\tau}} }_{\epsilon=0}.
\]
Using Lemma \ref{lem:1}, Proposition \ref{pro:3}, and the transformation properties of 
$\widehat{f}^{(M)}_{z_s+\epsilon}(z;\tau)$ in Proposition \ref{pro:6}, we can write it as
\[
-(c\tau+d)^k\e\rl{\frac{cMz^2}{c\tau+d}}\sum_{z_{s\gamma}(\tau) \in S_0^{(\tau)}} \sum_{j=1}^{n_s} \frac{D_j ^{(s\gamma)}(\tau)}{(j-1)!} 
\delta_\epsilon^{j-1}
\ql{\frac{\widehat{f}^{(M)}_{z_{s\gamma}+\epsilon}(z;\tau)}{F^{(s\gamma)}(\epsilon;\tau)} }_{\epsilon=0}.
\]
Notice that the sum over $z_s(\gamma\tau)\in S_0^{(\gamma\tau)}$ is the same as the sum over $z_{s\gamma}(\tau)\in S_0^{(\tau)}$ by virtue of \eqref{eq:20}.

Finally, the modularity of $\rl{\widehat{h}_\ell (\tau)}_{\ell \pmod{2M}}$ follows since its components are the theta-coefficients of $\widehat{\phi}^F(z;\tau)$, which transforms as a standard Jacobi form.
\end{proof}

\section{Proof of the main Theorem}
In this section we prove Theorem \ref{thm:1}, which follows from Theorem \ref{thm:2} below.
\begin{theorem}\label{thm:2}
The functions $\widehat{h}_{\ell}(\tau)$ are (vector--valued) almost harmonic Maass forms of weight $k-\frac{1}{2}$ and depth $\ql{\frac{N-1}{2}}$ for $\SL_2(\Z)$, where $N$ is the highest order of the poles of $\phi$.
\end{theorem}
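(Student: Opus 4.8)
The plan is to verify the two requirements of Definition \ref{defi:6} for the vector $\rl{\widehat{h}_\ell}_{\ell \pmod{2M}}$. Requirement (1), the weight $k-\frac12$ modular transformation, is precisely Proposition \ref{pro:5}, so that part is already done; the whole task is requirement (2), namely to exhibit each $\widehat{h}_\ell$ as a finite linear combination of terms $g_{j,i}R^{j-1}(h_i)$ with $g_{j,i}$ almost holomorphic modular and $h_i$ harmonic weak Maass forms. I would read this structure off directly from the definition of $\widehat{h}_\ell$, in which the only non-elementary pieces are the operators $\delta_\epsilon^{j-1}$ applied to the ratio of $\frac12 R_{M,\ell}(\epsilon+z_s;\tau)-\xi^{(s)}_{M,\ell}(\epsilon+z_s;\tau)$ by $F^{(s)}(\epsilon;\tau)$.

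The central manoeuvre is to feed these ratios into Proposition \ref{pro:7}, which however is phrased with $F^{(0,0)}$ in the denominator. Since $F^{(s)}(\epsilon;\tau)=F^{(0,0)}(\epsilon;\tau)\,e(M\alpha\beta+2M\alpha\epsilon)q^{M\alpha^2}$, I would introduce
\[
g^{(s)}_\ell(\epsilon;\tau):=\rl{\tfrac12 R_{M,\ell}(\epsilon+z_s;\tau)-\xi^{(s)}_{M,\ell}(\epsilon+z_s;\tau)}\,e(-M\alpha\beta-2M\alpha\epsilon)q^{-M\alpha^2},
\]
so that the bracket in the definition of $\widehat{h}_\ell$ equals $g^{(s)}_\ell/F^{(0,0)}$. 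To apply Proposition \ref{pro:7} I must check $H_M[g^{(s)}_\ell]=0$. Here I would use that each of $R_{M,\ell}(w;\tau)$ and $\xi^{(s)}_{M,\ell}(w;\tau)$ is killed by the level $M$ heat operator $8\pi iM\partial_\tau+\partial_w^2$ in its elliptic variable — for $\xi^{(s)}_{M,\ell}$ this is the termwise identity $\rl{8\pi iM\partial_\tau+\partial_w^2}\ql{q^{-r^2/4M}e(-rw)}=0$, and for $R_{M,\ell}$ it is the property already invoked in the proof of Proposition \ref{pro:7} — together with a completing-the-square computation: setting $w=\epsilon+\alpha\tau+\beta$ and $P:=e(-M\alpha\beta-2M\alpha\epsilon)q^{-M\alpha^2}$, the cross terms produced by $\partial_\epsilon^2$ and by $8\pi iM\partial_\tau$ cancel, leaving $H_M\ql{g^{(s)}_\ell}=\rl{\rl{8\pi iM\partial_\tau+\partial_w^2}\ql{\tfrac12 R_{M,\ell}-\xi^{(s)}_{M,\ell}}}P=0$. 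The factor $P$ is exactly what absorbs the $\tau$-dependence of the shift $z_s=\alpha\tau+\beta$.

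With heat annihilation in hand, Proposition \ref{pro:7} converts $\delta_\epsilon^{j-1}$ into an iterated raising operator: for $j-1=2p$ one gets $\rl{\frac M\pi}^p R_{\frac12}^p\rl{g^{(s)}_\ell(0;\tau)}$, and for $j-1=2p+1$ one gets $\rl{\frac M\pi}^p R_{\frac32}^p\rl{\delta_\epsilon\ql{g^{(s)}_\ell}_{\epsilon=0}}$. Summing over $j$ and over the poles $z_s\in S_0^{(\tau)}$, the contribution of each pole splits into an ``even'' tower built on $g^{(s)}_\ell(0;\tau)$ (of weight $\frac12$) and an ``odd'' tower built on $\delta_\epsilon\ql{g^{(s)}_\ell}_{\epsilon=0}$ (of weight $\frac32$). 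The coefficients become $\frac{D^{(s)}_j}{(j-1)!}\rl{\frac M\pi}^p$, which by Proposition \ref{pro:3} are almost holomorphic modular forms on $\Gamma_\phi$; a direct weight bookkeeping shows they fit the $g_{j,i}$ of Definition \ref{defi:6} with $\nu_i=k+1$ for the even tower and $\nu_i=k$ for the odd tower. Because the individual components $\widehat{h}_\ell$ are only the entries of a vector that is modular for $\SL_2(\Z)$, all of this must be carried out in the vector-valued sense allowed by the final Remark of Section 2.

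The step I expect to be genuinely hard is proving that the two base functions $g^{(s)}_\ell(0;\tau)$ and $\delta_\epsilon\ql{g^{(s)}_\ell}_{\epsilon=0}$ really are (vector-valued) harmonic weak Maass forms on $\Gamma_\phi$, and then reconciling the left-over mock object $h_\ell$ with the building-block form. Modularity of these bases should follow by specializing Zwegers's transformation laws for $R_{M,\ell}$ (Proposition \ref{pro:6}, and Proposition \ref{pro:8} for the $\widehat{\mu}_{n,\ell}$) to the rational points $z_s=\alpha\tau+\beta$, using that $\Gamma_\phi$ fixes $\mathbb{S}_0^{(\tau)}$ modulo $\Z^2$; harmonicity then follows because $\frac12 R_{M,\ell}-\xi^{(s)}_{M,\ell}$ has the holomorphic-part-plus-period-integral shape annihilated by $\Delta$. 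The delicate accounting is that $h_\ell$ is the holomorphic part of the answer (consistent with Theorem \ref{thm:1}), so one must check that the holomorphic contributions of the towers combine with $h_\ell$ into the holomorphic parts of the $h_i$, while the almost holomorphic coefficients $D^{(s)}_j$ are carried by the $g_{j,i}$ without spoiling harmonicity of the $h_i$. Granting this, the depth is immediate: the largest surviving power of the raising operator comes from the top Laurent coefficient $j=n_s=N$, which contributes $\delta_\epsilon^{N-1}$ and hence $R^{\ql{\frac{N-1}{2}}}$, giving depth $\ql{\frac{N-1}{2}}$ as claimed.
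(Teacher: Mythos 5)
Your reduction of the $\delta_\epsilon$-derivatives to iterated raising operators is sound and is in fact the paper's own route (Lemma \ref{lem:2}): the prefactor $e(-M\alpha\beta-2M\alpha\epsilon)q^{-M\alpha^2}$ absorbs the $\tau$-dependence of the shift $z_s$, the level $M$ heat operator kills $R_{M,\ell}$ and, termwise, $\xi^{(s)}_{M,\ell}$, and Proposition \ref{pro:7} then yields the even and odd towers with coefficients $D_j^{(s)}$ and the depth count $\ql{\frac{N-1}{2}}$; your weight bookkeeping ($\nu_i=k+1$ and $\nu_i=k$) is also consistent. The genuine gap is in the step you yourself flag as ``genuinely hard'': your base functions $g^{(s)}_\ell(0;\tau)$ and $\delta_\epsilon\ql{g^{(s)}_\ell}_{\epsilon=0}$ are \emph{not} harmonic weak Maass forms, and no specialization of Zwegers's transformation laws will make them so. Propositions \ref{pro:6} and \ref{pro:8} govern only the \emph{completed} objects $\widehat{f}^{(M)}_w$ and $\widehat{\mu}_{n,\ell}$; the function $R_{M,\ell}$ alone --- hence $\frac{1}{2}R_{M,\ell}-\xi^{(s)}_{M,\ell}$ --- transforms under inversion only up to Mordell-integral error terms, exactly because it is the non-holomorphic part of a completion and not a modular object in its own right. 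Annihilation by the Laplacian (which is all your ``holomorphic-part-plus-period-integral shape'' argument gives) supplies condition (2) of the definition of a harmonic weak Maass form but not condition (1), so requirement (2) of Definition \ref{defi:6} cannot be verified with these bases, and the proof stalls precisely at its central point.

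What is missing is the paper's key construction. It introduces the specializations $\rho^{(s)}_{M,\ell}$ of the multivariable Appell functions $\mu_{2M,\ell-M}$ at the torsion points $z_s$, together with their completions $\widehat{\rho}^{(s)}_{M,\ell}$, which \emph{are} vector-valued harmonic weak Maass forms of weights $\frac{1}{2}$ and $\frac{3}{2}$ (Proposition \ref{pro:1}), and it observes (Remark \ref{rem:1}) that $\widehat{\rho}^{(s)}_{M,\ell}-\rho^{(s)}_{M,\ell}$ is exactly your kernel $g^{(s)}_\ell$. Each of your towers then splits as a tower on $\widehat{\rho}^{(s)}_{M,\ell}$ (the genuine almost-harmonic part) minus a tower on the weakly holomorphic $\rho^{(s)}_{M,\ell}$. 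The leftover mock object $h_\ell$ is not absorbed ``for free'' either: one must prove that $h_\ell$ plus the $\rho$-towers is an almost holomorphic modular form, and its modularity is the second delicate point. The paper establishes it by showing that the vector $(\widehat{h}_\ell)_\ell$ and the two towers built on $\widehat{\rho}^{(s)}_{M,\ell}$ all transform with the \emph{same} multiplier matrices $a_{\ell,p}(\gamma)=b_{\ell,p}(\gamma)$, which is proved by comparing theta coefficients of the index $M$ Jacobi form $\sum_{\ell}\widehat{\rho}^{(s)}_{M,\ell}(0;\tau)\theta_{M,\ell}(z;\tau)$ with those of $\widehat{\phi}^F$. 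Your ``delicate accounting'' paragraph names both problems but offers no mechanism for either; without the Appell-function completions there is nothing with which to carry them out.
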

In order to prove Theorem \ref{thm:2} we need the following result, that allows us to write $\widehat{h}_{\ell}$ in terms of the raising operator.
\begin{lemma} \label{lem:2}
Let $R_{M,\ell}$ as in \eqref{eq:15}  and $\xi_{M,\ell}^{(s)}$ as in \eqref{eq:12}, then for all $s=(\alpha,\beta)\in \Q^2$ we have
\begin{multline*}
\delta_\epsilon ^{2j}\ql{\frac{\frac{1}{2}R_{M,\ell}(\epsilon+z_s;\tau)-\xi^{(s)}_{M,\ell}(\epsilon+z_s;\tau)}{F^{(s)}(\epsilon;\tau)}}_{\epsilon=0} \\
=\rlf{M}{\pi}^j R_{\frac{1}{2}}^{j}\rl{e\rl{-m\alpha\beta}q^{-M\alpha^2 }\rl{\frac{1}{2}R_{M,\ell}(z_s;\tau)-\xi^{(s)}_{M,\ell}(z_s;\tau)}}, 
\end{multline*}
and
\begin{multline*}
\delta_\epsilon ^{2j+1}\ql{\frac{\frac{1}{2}R_{M,\ell}(\epsilon+z_s;\tau)-\xi^{(s)}_{M,\ell}(\epsilon+z_s;\tau)}{F^{(s)}(\epsilon;\tau)}}_{\epsilon=0} \\
=\rlf{M}{\pi}^j R_{\frac{3}{2}}^{j}\rl{\delta_\epsilon \ql{e\rl{-m\alpha\beta-2M\alpha\epsilon}q^{-M\alpha^2 } \rl{\frac{1}{2}R_{M,\ell}(\epsilon+z_s;\tau)-\xi^{(s)}_{M,\ell}(\epsilon+ z_s;\tau)}}_{\epsilon=0}}.
\end{multline*}
\end{lemma}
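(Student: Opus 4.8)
The plan is to reduce both identities to Proposition \ref{pro:7}, which already evaluates $\delta_\epsilon^n\ql{g/F^{(0,0)}}_{\epsilon=0}$ in terms of the raising operator whenever $g$ is annihilated by the heat operator $H_M$. The only mismatch is that the left-hand sides here involve $F^{(s)}$ rather than $F^{(0,0)}$, and this I would dissolve with the identity $F^{(s)}(\epsilon;\tau)=F^{(0,0)}(\epsilon;\tau)\,e\rl{M\alpha\beta+2M\alpha\epsilon}q^{M\alpha^2}$, immediate from the definition of $F^{(s)}$. Writing $g(w;\tau):=\tfrac12 R_{M,\ell}(w;\tau)-\xi^{(s)}_{M,\ell}(w;\tau)$ and setting $\widetilde g(\epsilon;\tau):=e\rl{-M\alpha\beta-2M\alpha\epsilon}q^{-M\alpha^2}\,g(\epsilon+z_s;\tau)$, this identity gives $g(\epsilon+z_s;\tau)/F^{(s)}(\epsilon;\tau)=\widetilde g(\epsilon;\tau)/F^{(0,0)}(\epsilon;\tau)$, so the two displayed left-hand sides are exactly $\delta_\epsilon^{2j}\ql{\widetilde g/F^{(0,0)}}_{\epsilon=0}$ and $\delta_\epsilon^{2j+1}\ql{\widetilde g/F^{(0,0)}}_{\epsilon=0}$. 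Since $\widetilde g(0;\tau)=e(-M\alpha\beta)q^{-M\alpha^2}\rl{\tfrac12 R_{M,\ell}(z_s;\tau)-\xi^{(s)}_{M,\ell}(z_s;\tau)}$ is the argument of $R_{1/2}^j$ in the first claimed formula, and $\delta_\epsilon\ql{\widetilde g}_{\epsilon=0}$ is literally the bracket in the second, Proposition \ref{pro:7} (equations \eqref{eq:25} and \eqref{eq:16}) delivers both simultaneously — provided its hypothesis $H_M[\widetilde g]=0$ is met (here $m$ in the statement is the index $M$).

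The heart of the proof is thus to verify $H_M[\widetilde g]=0$, which I would split into two steps. First, $g$ itself is heat-annihilated as a function of $(w,\tau)$. For $\xi^{(s)}_{M,\ell}$ this is a term-by-term check: the coefficients in \eqref{eq:12} are constants, and each monomial satisfies $\rl{8\pi iM\,\partial_\tau+\partial_w^2}q^{-\frac{r^2}{4M}}e(-rw)=\rl{4\pi^2 r^2-4\pi^2 r^2}q^{-\frac{r^2}{4M}}e(-rw)=0$. For $R_{M,\ell}$, annihilation by $H_M$ is precisely the property isolated in the proof of Theorem 3.5 of \cite{BF2} (resting on the heat equation of Zwegers's function $R$ in \cite{Zwegers:PhD}), which I would cite rather than re-derive. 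Hence $H_M[g]=0$.

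Second, I would show that the shift $w\mapsto\epsilon+z_s$ together with the twist by $A(\epsilon;\tau):=e(-M\alpha\beta-2M\alpha\epsilon)q^{-M\alpha^2}$ preserves heat-annihilation. With $w=\epsilon+\alpha\tau+\beta$, so that $\partial_\epsilon w=1$ and $\partial_\tau w=\alpha$, and using $\partial_\epsilon A=-4\pi iM\alpha\,A$, $\partial_\tau A=-2\pi iM\alpha^2 A$, a direct chain-rule computation gives $\partial_\epsilon^2\widetilde g=A\ql{\partial_w^2 g-8\pi iM\alpha\,\partial_w g-16\pi^2 M^2\alpha^2 g}$ and $8\pi iM\,\partial_\tau\widetilde g=A\ql{8\pi iM\,\partial_\tau g+8\pi iM\alpha\,\partial_w g+16\pi^2 M^2\alpha^2 g}$, where $\partial_\tau g$ is the derivative in the second slot. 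Adding them, the two $\partial_w g$ terms cancel and the two $\alpha^2 g$ terms cancel, leaving $H_M[\widetilde g]=A\,(H_M g)(\epsilon+z_s;\tau)=0$. Feeding this into Proposition \ref{pro:7} and reading off $\widetilde g(0;\tau)$ and $\delta_\epsilon\ql{\widetilde g}_{\epsilon=0}$ as above yields both identities. I expect the only genuinely delicate input to be the heat-annihilation of $R_{M,\ell}$, which I import from \cite{BF2}; the $F^{(s)}/F^{(0,0)}$ bookkeeping and the chain-rule cancellation are routine.
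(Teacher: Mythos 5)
Your proposal is correct and follows essentially the same route as the paper: both reduce the lemma to Proposition \ref{pro:7} and then verify the heat-equation hypothesis $H_M[\widetilde{g}]=0$ for the twisted-shifted function, with the $\xi^{(s)}_{M,\ell}$ term checked by direct computation. The only difference is one of detail: the paper cites \cite{MR2661165} for the annihilation of the already twisted-and-shifted $R_{M,\ell}$ term in one stroke, whereas you import only the bare heat equation for $R_{M,\ell}$ and supply yourself the (correct) chain-rule computation showing that the shift $w\mapsto\epsilon+z_s$ and the twist by $e(-M\alpha\beta-2M\alpha\epsilon)q^{-M\alpha^2}$ preserve heat annihilation.
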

\begin{proof}
According to Proposition \ref{pro:7} it is enough to prove that 
\[
H_M\ql{e(-2M\alpha\epsilon)q^{-M\alpha^2}\rl{\frac{1}{2}R_{M,\ell}(\epsilon+z_s;\tau)-\xi^{(s)}_{M,\ell}(\epsilon+ z_s;\tau)}}=0 .
\]
By virtue of \cite{MR2661165} the first term is annihilated, namely
\[
H_M\ql{e(-2M\alpha\epsilon)q^{-M\alpha^2}R_{M,\ell}(\epsilon+z_s;\tau)}=0,
\]
while for the other term it is just a straightforward computation.
\end{proof}

Finally, we need to define a vector-valued harmonic weak Maass form, whose non-holomorphic part is essentially
\[
\frac{1}{2}R_{M,\ell}(\epsilon+z_s;\tau)-\xi^{(s)}_{M,\ell}(\epsilon+ z_s;\tau).
\]
In order to do so, for $s=(\alpha,\beta)\in \Q^2\smallsetminus\{(0,0)\}$ we consider the function
\begin{multline*}
\rho^{(s)}_{M,\ell}(u;\tau):=e(-M\alpha\beta)e\rl{-2M\alpha u}
q^{-M\alpha^2 } \\
\times \rl{\mu_{2M,\ell-M}\rl{2M(u+\alpha\tau+\beta),\textbf{v};\tau} +\xi^{(s)}_{M,\ell}(u+\alpha \tau+\beta;\tau)},
\end{multline*} 
where $\textbf{v}:=\left( -\frac{1}{2},\frac{1}{2},\dots ,-\frac{1}{2},\frac{1}{2} \right) $, $\xi^{(s)}_{M,\ell}$ is the function defined in \eqref{eq:12}, and $\mu_{2M,\ell-M}$ is the Appell function defined in \eqref{eq:18}.

Similarly, for $s=(0,0)$ we set
\[
\rho^{(0,0)}_{M,\ell}(u;\tau):=\mu_{2M,\ell-M}\rl{2Mu+\frac{\tau}{2},\textbf{w};\tau} +\xi^{(0,0)}_{M,\ell}(u;\tau),
\]
where $\textbf{w}:=\textbf{v}+\left( \frac{\tau}{2},0,0,\dots ,0 \right) $. 
Finally, we define their completions by
\[
\widehat{\rho}^{(s)}_{M,\ell}(u;\tau):=e(-M\alpha\beta)e\rl{-2M\alpha u}
q^{-M\alpha^2 }\widehat{\mu}_{2M,\ell-M}\rl{2M(u+\alpha\tau+\beta),\textbf{v};\tau}
\]
and
\begin{equation}
\widehat{\rho}^{(0,0)}_{M,\ell}(u;\tau):=\widehat{\mu}_{2M,\ell-M}\rl{2Mu+\frac{\tau}{2},\textbf{w};\tau},
\end{equation}
respectively.

\begin{remark} \label{rem:1}
From \cite{Zwegers1} it follows that 
\[
\widehat{\rho}^{(s)}_{M,\ell}(u;\tau)=\rho^{(s)}_{M,\ell}(u;\tau) +e(-M\alpha\beta-2M\alpha\epsilon)q^{-\alpha^2M}\rl{\frac{1}{2}R_{M,\ell}(\epsilon+z_s;\tau)-\xi^{(s)}_{M,\ell}(\epsilon+z_s;\tau)},
\]
where $R_{M,\ell}$ is defined in \eqref{eq:15}.
\end{remark}
From the following lemma and from the transformation properties of $\widehat{\mu}_{\ell,2M}$ we construct two (vector-valued) harmonic weak Maass forms, for each finite set $S\in \Q^2$ preserved by the action of $\widetilde{\Gamma}$ modulo $\Z^2$, where $\widetilde{\Gamma}$ is a congruence subgroup of $\SL_2(\Z)$. With this we simply mean that for each $s \in S$ and for each $\gamma \in \widetilde{\Gamma}$, there exists $\lambda \in \Z^2$ such that $s= s \gamma+\lambda$. The following lemma follows from a straightforward calculation using the elliptic property of $\widehat{\mu}_{2M,\ell}$ (see Proposition \ref{pro:8}).

\begin{lemma} \label{lem:3}
If $s \in \Q ^2$, then for all $(\lambda,\mu) \in \Z^2$
\[
\widehat{\rho}^{(s+(\lambda,\mu))}_{M,\ell}(u;\tau) =e\rl{M(-\alpha\mu+\beta\lambda)}\widehat{\rho}^{(s)}_{M,\ell}(u;\tau) .
\]
\end{lemma}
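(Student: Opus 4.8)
The plan is to prove Lemma \ref{lem:3} by reducing the claimed transformation law for $\widehat{\rho}^{(s)}_{M,\ell}$ to the elliptic transformation property (1) of the completed Appell function $\widehat{\mu}_n$ recorded in Proposition \ref{pro:8}. Recall that by definition
\[
\widehat{\rho}^{(s)}_{M,\ell}(u;\tau)=e(-M\alpha\beta)\,e(-2M\alpha u)\,q^{-M\alpha^2}\,\widehat{\mu}_{2M,\ell-M}\bigl(2M(u+\alpha\tau+\beta),\textbf{v};\tau\bigr),
\]
so replacing $s=(\alpha,\beta)$ by $s+(\lambda,\mu)=(\alpha+\lambda,\beta+\mu)$ shifts the argument of $\widehat\mu_{2M,\ell-M}$ by $2M(\lambda\tau+\mu)$ in its first slot and multiplies the prefactor by an explicit exponential in $\lambda,\mu,\alpha,\beta,u$. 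The whole proof is therefore a bookkeeping comparison between two explicit expressions.

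First I would isolate the shift in the $\widehat\mu$-argument: since $2M\bigl((u+(\alpha+\lambda)\tau+(\beta+\mu)\bigr)=2M(u+\alpha\tau+\beta)+2M(\lambda\tau+\mu)$, this is an elliptic translation of $\widehat\mu_{2M,\ell-M}$ by $\lambda_1\tau+\nu_1$ with $\lambda_1=2M\lambda$, $\nu_1=2M\mu$ (and $\boldsymbol\lambda_2=\boldsymbol\nu_2=0$, so the hypothesis $\lambda_1-|\boldsymbol\lambda_2|\in n\Z$ with $n=2M$ reads $2M\lambda\in 2M\Z$, which holds). I would then invoke property (1) of Proposition \ref{pro:8} to rewrite $\widehat\mu_{2M,\ell-M}$ at the shifted argument as $\widehat\mu_{2M,\ell-M}$ at the unshifted argument times the prefactor
\[
(-1)^{2M\lambda+2M\mu}\,e\!\left(-\tfrac{2M\lambda}{2M}\bigl(2M(u+\alpha\tau+\beta)-|\textbf{v}|\bigr)\right)q^{-\frac{(2M\lambda)^2}{4M}},
\]
noting that $|\textbf{v}|=0$ for $\textbf{v}=(-\tfrac12,\tfrac12,\dots,-\tfrac12,\tfrac12)$ and that the shift acts on $\widehat\mu_n$ rather than on the shifted $\widehat\mu_{n,\ell}$, so property (1) (which is stated for $\widehat\mu_n$) applies after unwinding the definition \eqref{eq:18}. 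The hard part will be keeping this unwinding honest: one must track how the elliptic shift interacts with the $(-1)^\ell q^{-\ell^2/(2n)}e(-\tfrac\ell n(u-|\textbf{v}|))$ factor built into $\mu_{n,\ell}$, and confirm that the extra $q$- and $e$-factors it contributes combine cleanly with those coming from the prefactor.

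Finally I would multiply through by the ratio of the two $s$-prefactors, namely $e(-M(\alpha+\lambda)(\beta+\mu))e(-2M\lambda u)q^{-M((\alpha+\lambda)^2-\alpha^2)}$ divided by the original, and collect all contributions. All the $u$-dependent exponentials and all the $q$-powers must cancel, since the claimed answer $e(M(-\alpha\mu+\beta\lambda))$ depends only on $\alpha,\beta,\lambda,\mu$; verifying these cancellations is the one genuine computation, and the antisymmetric combination $-\alpha\mu+\beta\lambda$ should emerge from the cross term $-M(\alpha\mu+\beta\lambda)$ in the prefactor ratio combined with the $+2M\alpha\lambda\cdot\beta$-type and $2M\lambda\mu$-type terms produced by Proposition \ref{pro:8}(1); the sign factor $(-1)^{2M(\lambda+\mu)}=1$ disappears because $2M$ is even, and the surviving phase is exactly $e(M(\beta\lambda-\alpha\mu))$. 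I expect no structural obstacle beyond this algebra, since the result is purely a consequence of the elliptic behaviour of $\widehat\mu$ and the explicit $s$-dependence packaged into $\widehat\rho^{(s)}_{M,\ell}$.
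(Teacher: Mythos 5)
Your proposal is correct and follows the same route as the paper, which proves Lemma \ref{lem:3} exactly by the ``straightforward calculation using the elliptic property of $\widehat{\mu}_{2M,\ell}$ (see Proposition \ref{pro:8})'' that you carry out: apply Proposition \ref{pro:8}(1) with $\lambda_1=2M\lambda$, $\nu_1=2M\mu$, $\boldsymbol\lambda_2=\boldsymbol\nu_2=0$, use $|\textbf{v}|=0$, and match prefactors. Your anticipated cancellations are the right ones (the $u$-dependent and $q$-power factors cancel, the sign $(-1)^{2M(\lambda+\mu)}=1$, and the leftover phase is $e\rl{M(\beta\lambda-\alpha\mu)}$ up to the integral-exponent factor $e\rl{-M\lambda\mu}=1$), so the bookkeeping closes as you predict.
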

By a direct calculation, from Lemma \ref{lem:3} and from Proposition \ref{pro:8}, we obtain  the following result.
\begin{proposition} \label{pro:1}
If $S\in \Q^2$ is a finite set preserved by the action of a congruence subgroup $\widetilde{\Gamma}$ of $\SL_2(\Z)$ modulo $\Z^2$, then 
the functions $\rl{\widehat{\rho}^{(s)}_{M,\ell}(0;\tau)}_{\ell \pmod{2M}}$ and $\rl{\frac{\partial}{\partial \epsilon} \ql{\widehat{\rho}^{(s)}_{M,\ell}(\epsilon;\tau)}_{\epsilon=0}}_{\ell \pmod{2M}}$ are vector-valued harmonic weak Maass forms of weights $\frac{1}{2}$ and $\frac{3}{2}$, respectively, for $\widetilde{\Gamma} \cap \Gamma(2)$.
\end{proposition}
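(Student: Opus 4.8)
The plan is to verify directly the three defining conditions of a (vector--valued) harmonic weak Maass form---modular transformation, harmonicity, and controlled growth at the cusps---for each of the two vectors $\rl{\widehat{\rho}^{(s)}_{M,\ell}(0;\tau)}_{\ell\pmod{2M}}$ and $\rl{\frac{\partial}{\partial\epsilon}\ql{\widehat{\rho}^{(s)}_{M,\ell}(\epsilon;\tau)}_{\epsilon=0}}_{\ell\pmod{2M}}$. The central input is that, after assembling the pieces, $\rl{\widehat{\rho}^{(s)}_{M,\ell}(\epsilon;\tau)}_{\ell}$ behaves as a two--variable vector--valued Jacobi form of weight $\frac12$ in the pair $(\epsilon,\tau)$; the two assertions are then the zeroth and first ``development coefficients'' of this object at $\epsilon=0$, of weights $\frac12$ and $\frac32$ respectively.

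First I would establish the modularity. Fix $\gamma=\bigl(\begin{smallmatrix} a&b\\c&d\end{smallmatrix}\bigr)\in\widetilde{\Gamma}\cap\Gamma(2)$ and apply the modular transformation laws of Proposition \ref{pro:8} (the translation law (2) and the inversion law (3)) to $\widehat{\mu}_{2M,\ell-M}\rl{2M(\epsilon+\alpha\tau+\beta),\textbf{v};\tau}$ evaluated at $\gamma\tau$. The inversion law is genuinely matrix--valued: it mixes the residues $\ell\pmod{2M}$ and produces the automorphy matrix $U(\gamma)$, while dividing the elliptic arguments by $c\tau+d$ and generating a Gaussian exponential. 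To restore the shift $\alpha(\gamma\tau)+\beta$ to one of the form $\alpha'\tau+\beta'$ I would invoke the elliptic law (1) of Proposition \ref{pro:8}, equivalently Lemma \ref{lem:3}: since $S$ is preserved by $\widetilde{\Gamma}$ modulo $\Z^2$, one has $s\gamma\equiv s'\pmod{\Z^2}$ for some $s'\in S$, and Lemma \ref{lem:3} rewrites $\widehat{\rho}^{(s\gamma)}_{M,\ell}$ as $\widehat{\rho}^{(s')}_{M,\ell}$ up to a root of unity that is trivial on $\widetilde{\Gamma}$ (exactly as the factor $\e(M(\alpha\mu-\beta\lambda))$ collapses to $1$ in the proof of Proposition \ref{pro:3}). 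The prefactors $\e(-M\alpha\beta)\e(-2M\alpha\epsilon)q^{-M\alpha^2}$---which are, up to the Gaussian $F^{(0,0)}$, the reciprocal of $F^{(s)}(\epsilon;\tau)$ of Lemma \ref{lem:1}---are arranged precisely so that the resulting index automorphy factor is purely quadratic in $\epsilon$. Specializing at $\epsilon=0$ then yields weight $\frac12$, while differentiating once in $\epsilon$ before specializing multiplies by one further power of $c\tau+d$ (the $\epsilon$--derivative of the quadratic factor vanishing at $\epsilon=0$), yielding weight $\frac32$.

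Next I would treat harmonicity and growth. For harmonicity, observe that $\mu_{2M,\ell-M}$ and $\xi^{(s)}_{M,\ell}$ are holomorphic $q$--series in $\tau$, so the holomorphic parts $\rho^{(s)}_{M,\ell}(0;\tau)$ and $\frac{\partial}{\partial\epsilon}\ql{\rho^{(s)}_{M,\ell}(\epsilon;\tau)}_{\epsilon=0}$ are holomorphic, hence annihilated by $\Delta_{\frac12}$ and $\Delta_{\frac32}$; by Zwegers's computation in \cite{Zwegers1} the completion $\widehat{\mu}$ is a harmonic Maass--Jacobi form, so its non--holomorphic $R$--part supplies exactly the standard incomplete--Gamma/$v^{1-k}$ shape that is also killed by the respective Laplacian. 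Since the specialization and the holomorphic covariant prefactor preserve this splitting, the completed vectors are harmonic. For the growth condition, the Fourier--Jacobi expansion of $\mu_{2M,\ell-M}$ has at most linear exponential growth at each cusp and the error--function part of $R$ is bounded; as $S$ is finite and the group is a congruence subgroup, only finitely many (shifted) cusps must be inspected, each contributing no more than the allowed growth.

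The hard part will be the modularity bookkeeping of the second paragraph: composing the matrix--valued inversion law (3) of Proposition \ref{pro:8} with the elliptic shift and checking that the accumulated multiplier assembles into a well--defined weight--$\frac12$ (resp.\ $\frac32$) automorphy factor on all of $\widetilde{\Gamma}\cap\Gamma(2)$. This is exactly where the half--integral entries of $\textbf{v}=\rl{-\frac12,\frac12,\dots,-\frac12,\frac12}$ and the theta--multipliers in $\mu_{2M,\ell-M}$ force the level $\Gamma(2)$, and where the $\widetilde{\Gamma}$--invariance of $S$ modulo $\Z^2$ is indispensable in closing the transformation back onto the same vector.
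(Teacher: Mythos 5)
Your proposal is correct and takes essentially the same route as the paper: the paper's entire proof is the remark that the result follows ``by a direct calculation'' from Lemma \ref{lem:3} and Proposition \ref{pro:8}, which are exactly the two inputs (elliptic shift invariance of $\widehat{\rho}^{(s)}_{M,\ell}$ and the transformation laws of $\widehat{\mu}_{n,\ell}$) that your modularity argument composes, with harmonicity and growth imported from Zwegers's completion just as the paper implicitly does. The one inaccuracy is your parenthetical claim that the root of unity produced by Lemma \ref{lem:3} is trivial on $\widetilde{\Gamma}$ ``exactly as in Proposition \ref{pro:3}'': that collapse relied on the integrality conditions defining $\Gamma_{\phi}$, which $\widetilde{\Gamma}$ is not assumed to satisfy here, but this is harmless because the constant $e\rl{M\rl{\beta\lambda-\alpha\mu}}$ depends only on $\gamma$ and $s$ and is simply absorbed into the matrix $U(\gamma)$ permitted in the definition of vector-valued forms.
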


\begin{proof}[Proof of Theorem \ref{thm:2}]
We have already seen in Proposition \ref{pro:5} that $\widehat{h}_{\ell}(\tau)$ transforms like a (vector-valued) modular form of weight $k-\frac{1}{2}$ for $\SL_2(\Z)$. Furthermore, according to Lemma
\ref{lem:2} and Remark \ref{rem:1}, we can rewrite it in terms of the raising operator, namely
\begin{multline*}
\widehat{h}_{\ell}(\tau)=h_\ell(\tau)
+\sum_{z_s(\tau) \in S_0^{(\tau)}} \sum_{h=0}^{\ql{\frac{n_s-1}{2}}} \frac{D_{2h+1} ^{(s)}(\tau)}{(2h)!}\rlf{M}{\pi}^h 
R^{h}_{\frac{1}{2}} \rl{\rho^{(s)}_{M,\ell}(0;\tau)}\\
+\sum_{z_s(\tau) \in S_0^{(\tau)}} \sum_{h=0}^{\ql{\frac{n_s}{2}}-1} \frac{D_{2h+2} ^{(s)}(\tau)}{(2h+1)!}\rlf{M}{\pi}^h 
R^{h}_{\frac{3}{2}} \rl{\delta_\epsilon\ql{\rho^{(s)}_{M,\ell}(\epsilon;\tau)}_{\epsilon=0}}\\
-\sum_{z_s(\tau) \in S_0^{(\tau)}} \sum_{h=0}^{\ql{\frac{n_s-1}{2}}} \frac{D_{2h+1} ^{(s)}(\tau)}{(2h)!}\rlf{M}{\pi}^h 
R^{h}_{\frac{1}{2}} \rl{\widehat{\rho}^{(s)}_{M,\ell}(0;\tau)}\\
-\sum_{z_s(\tau) \in S_0^{(\tau)}} \sum_{h=0}^{\ql{\frac{n_s}{2}}-1} \frac{D_{2h+2} ^{(s)}(\tau)}{(2h+1)!}\rlf{M}{\pi}^h 
R^{h}_{\frac{3}{2}} \rl{\delta_\epsilon\ql{\widehat{\rho}^{(s)}_{M,\ell}(\epsilon;\tau)}_{\epsilon=0}}.
\end{multline*}
Notice that Proposition \ref{pro:1} and Proposition \ref{pro:3} tell us that the last two sums are (vector-valued) almost harmonic Maass forms of weight $k-\frac{1}{2}$. In fact, a computation similar to the proof of Proposition \ref{pro:5} tells us that these objects transform with respect to $\Gamma(2)$. Therefore, the only thing that remains to prove is that the function
\begin{multline*}
h_\ell(\tau)
+\sum_{z_s(\tau) \in S_0^{(\tau)}} \sum_{h=0}^{\ql{\frac{n_s-1}{2}}} \frac{D_{2h+1} ^{(s)}(\tau)}{(2h)!}\rlf{M}{\pi}^h 
R^{h}_{\frac{1}{2}} \rl{\rho^{(s)}_{M,\ell}(0;\tau)}\\
+\sum_{z_s(\tau) \in S_0^{(\tau)}} \sum_{h=0}^{\ql{\frac{n_s}{2}}-1} \frac{D_{2h+2} ^{(s)}(\tau)}{(2h+1)!}\rlf{M}{\pi}^h 
R^{h}_{\frac{3}{2}} \rl{\delta_\epsilon\ql{\rho^{(s)}_{M,\ell}(\epsilon;\tau)}_{\epsilon=0}}
\end{multline*}
is the $\ell$-th component of a vector-valued almost holomorphic modular form of weight $k-\frac{1}{2}$ for $\Gamma(2)$.
Clearly this function is a polynomial in $\frac{1}{v}$ with weakly holomorphic coefficients, since $D^{(s)}_j$ are almost holomorphic modular forms, $\rho^{(s)}_{M,\ell}$ is weakly holomorphic and the action of the raising operator to a weakly holomorphic function gives a polynomial in $\frac{1}{v}$ with weakly holomorphic coefficients. It remains to prove that it transforms like a (vector-valued) modular form. By definition this function can be written as 
\[ 
\widehat{h}_\ell(\tau)
+\Sigma_{\frac{1}{2},\ell}+\Sigma_{\frac{3}{2},\ell},
\]
where
\[
\Sigma_{\frac{1}{2},\ell}:=\sum_{z_s(\tau) \in S_0^{(\tau)}} \sum_{h=0}^{\ql{\frac{n_s-1}{2}}} \frac{D_{2h+1} ^{(s)}(\tau)}{(2h)!}\rlf{M}{\pi}^h 
R^{h}_{\frac{1}{2}} \rl{\widehat{\rho}^{(s)}_{M,\ell}(0;\tau)}
\]
and
\[
\Sigma_{\frac{3}{2},\ell}:=\sum_{z_s(\tau) \in S_0^{(\tau)}} \sum_{h=0}^{\ql{\frac{n_s}{2}}-1} \frac{D_{2h+2} ^{(s)}(\tau)}{(2h+1)!}\rlf{M}{\pi}^h 
R^{h}_{\frac{3}{2}} \rl{\delta_\epsilon\ql{\widehat{\rho}^{(s)}_{M,\ell}(\epsilon;\tau)}_{\epsilon=0}}.
\]
We already know that $(\widehat{h}_\ell(\tau))_{\ell \pmod{2M}}$, $(\Sigma_{\frac{1}{2},\ell})_{\ell \pmod{2M}}$, and $(\Sigma_{\frac{3}{2},\ell})_{\ell \pmod{2M}}$ transform as vector-valued modular forms of weight $k-\frac{1}{2}$ for $\Gamma(2)$. We claim that the $2M\times 2M$ matrices associated to these three functions are the same.  
From Proposition \ref{pro:1}, and more generally from the transformation properties of $\widehat{\mu}_{\ell,M}$, it follows that 
\[
\delta_\epsilon\ql{\widehat{\rho}^{(s)}_{M,\ell}(\epsilon;\gamma\tau)}_{\epsilon=0}=
(c\tau+d)^{\frac{3}{2}}\sum_{p  \pmod{2M}}a_{\ell,p}(\gamma)
\delta_\epsilon\ql{\widehat{\rho}^{(s\gamma)}_{M,p}(\epsilon;\tau)}_{\epsilon=0}
\]
and
\[
\widehat{\rho}^{(s)}_{M,\ell}(0;\gamma\tau)=
(c\tau+d)^{\frac{1}{2}}\sum_{p  \pmod{2M}}a_{\ell,p}(\gamma)
\widehat{\rho}^{(s\gamma)}_{M,p}(0;\tau)
\]
for some coefficients $a_{\ell,p}(\gamma)$.
Since they are both harmonic Maass forms, and hence modular, applying the raising operator $R^{2h}_*$ yields a form with the same transformation law (same multipliers and characters), with the weight increased by $2h$. Indeed, it is a well known fact that the raising operator commutes with the slash-operator as 
\[
R_k\rl{f|_k \gamma}=R_k\rl{f}|_{k+2}\gamma.
\]
On the other hand 
\[
D_{j} ^{(s)}(\gamma\tau)=(c\tau+d)^{k-j} D_{j} ^{(s\gamma)}(\tau).
\]
From that it follows that the entries of the transformation matrices for $(\Sigma_{\frac{1}{2},\ell})_{\ell \pmod{2M}}$ and $(\Sigma_{\frac{3}{2},\ell})_{\ell \pmod{2M}}$ are the same, namely $a_{\ell,p}(\gamma)$. On the other hand, we already know that
\[
\widehat{h}_{\ell}(\gamma\tau)=(c\tau +d)^{k-\frac{1}{2}}
\sum_{p  \pmod{2M}}b_{\ell,p}(\gamma)\widehat{h}_{p}(\tau).
\]
To finish the proof it is enough to point out that $b_{\ell,p}(\gamma)=a_{\ell,p}(\gamma)$.
Indeed, from (4.2) in \cite{Zwegers1} it is straightforward to see that
\[
\sum_{\ell  \pmod{2M}}\widehat{\rho}^{(s)}_{M,\ell}(0;\tau)\theta_{M,\ell}(z;\tau)
\]
transforms as a Jacobi form of weight $1$ and index $M$ for $\Gamma_0(2)$, exactly as $\widehat{\phi}^{F}(z;\tau)$. In particular their Fourier coefficients are the functions $\widehat{\rho}^{(s)}_{M,\ell}(0;\tau)$ and $\widehat{h}_{\ell}(\tau)$, respectively, and they transform as vector-valued modular forms with respect to the same transformation matrix. 
This prove the theorem since an almost holomorphic modular form is an almost harmonic weak Maass form. 
\end{proof}

\end{document}